\title{Graph Derangements}
\author{Pete L. Clark}
\thanks{Thanks to Josh Laison and the rest of the Willamette University mathematics department for their interest in this work and for their generous funding and hosting of my visit in March 2012.}
\begin{document}
\newtheorem{lemma}{Lemma}
\newtheorem{prop}[lemma]{Proposition}
\newtheorem{cor}[lemma]{Corollary}
\newtheorem{thm}[lemma]{Theorem}
\newtheorem{ques}{Question}
\newtheorem{quest}[ques]{Question}
\newtheorem{conj}[lemma]{Conjecture}
\newtheorem{fact}[lemma]{Fact}
\newtheorem*{mainthm}{Main Theorem}
\newtheorem{obs}[lemma]{Observation}
\newtheorem{hint}{Hint}
\newtheorem{prob}{Problem}
\maketitle
\newcommand{\pp}{\mathfrak{p}}
\renewcommand{\gg}{\mathfrak{g}}
\newcommand{\DD}{\mathcal{D}}
\newcommand{\F}{\ensuremath{\mathbb F}}
\newcommand{\Fp}{\ensuremath{\F_p}}
\newcommand{\Fl}{\ensuremath{\F_l}}
\newcommand{\Fpbar}{\overline{\Fp}}
\newcommand{\Fq}{\ensuremath{\F_q}}
\newcommand{\PP}{\mathbb{P}}
\newcommand{\PPone}{\mathfrak{p}_1}
\newcommand{\PPtwo}{\mathfrak{p}_2}
\newcommand{\PPonebar}{\overline{\PPone}}
\newcommand{\N}{\ensuremath{\mathbb N}}
\newcommand{\Q}{\ensuremath{\mathbb Q}}
\newcommand{\Qbar}{\overline{\Q}}
\newcommand{\R}{\ensuremath{\mathbb R}}
\newcommand{\Z}{\ensuremath{\mathbb Z}}
\newcommand{\SSS}{\ensuremath{\mathcal{S}}}
\newcommand{\Rn}{\ensuremath{\mathbb R^n}}
\newcommand{\Ri}{\ensuremath{\R^\infty}}
\newcommand{\C}{\ensuremath{\mathbb C}}
\newcommand{\Cn}{\ensuremath{\mathbb C^n}}
\newcommand{\Ci}{\ensuremath{\C^\infty}}
\newcommand{\U}{\ensuremath{\mathcal U}}
\newcommand{\gn}{\ensuremath{\gamma^n}}
\newcommand{\ra}{\ensuremath{\rightarrow}}
\newcommand{\fhat}{\ensuremath{\hat{f}}}
\newcommand{\ghat}{\ensuremath{\hat{g}}}
\newcommand{\hhat}{\ensuremath{\hat{h}}}
\newcommand{\covui}{\ensuremath{\{U_i\}}}
\newcommand{\covvi}{\ensuremath{\{V_i\}}}
\newcommand{\covwi}{\ensuremath{\{W_i\}}}
\newcommand{\Gt}{\ensuremath{\tilde{G}}}
\newcommand{\gt}{\ensuremath{\tilde{\gamma}}}
\newcommand{\Gtn}{\ensuremath{\tilde{G_n}}}
\newcommand{\gtn}{\ensuremath{\tilde{\gamma_n}}}
\newcommand{\gnt}{\ensuremath{\gtn}}
\newcommand{\Gnt}{\ensuremath{\Gtn}}
\newcommand{\Cpi}{\ensuremath{\C P^\infty}}
\newcommand{\Cpn}{\ensuremath{\C P^n}}
\newcommand{\lla}{\ensuremath{\longleftarrow}}
\newcommand{\lra}{\ensuremath{\longrightarrow}}
\newcommand{\Rno}{\ensuremath{\Rn_0}}
\newcommand{\dlra}{\ensuremath{\stackrel{\delta}{\lra}}}
\newcommand{\pii}{\ensuremath{\pi^{-1}}}
\newcommand{\la}{\ensuremath{\leftarrow}}
\newcommand{\gonem}{\ensuremath{\gamma_1^m}}
\newcommand{\gtwon}{\ensuremath{\gamma_2^n}}
\newcommand{\omegabar}{\ensuremath{\overline{\omega}}}
\newcommand{\dlim}{\underset{\lra}{\lim}}
\newcommand{\ilim}{\operatorname{\underset{\lla}{\lim}}}
\newcommand{\Hom}{\operatorname{Hom}}
\newcommand{\Ext}{\operatorname{Ext}}
\newcommand{\Part}{\operatorname{Part}}
\newcommand{\Ker}{\operatorname{Ker}}
\newcommand{\im}{\operatorname{im}}
\newcommand{\ord}{\operatorname{ord}}
\newcommand{\unr}{\operatorname{unr}}
\newcommand{\B}{\ensuremath{\mathcal B}}
\newcommand{\Ocr}{\ensuremath{\Omega_*}}
\newcommand{\Rcr}{\ensuremath{\Ocr \otimes \Q}}
\newcommand{\Cptwok}{\ensuremath{\C P^{2k}}}
\newcommand{\CC}{\ensuremath{\mathcal C}}
\newcommand{\gtkp}{\ensuremath{\tilde{\gamma^k_p}}}
\newcommand{\gtkn}{\ensuremath{\tilde{\gamma^k_m}}}
\newcommand{\QQ}{\ensuremath{\mathcal Q}}
\newcommand{\I}{\ensuremath{\mathcal I}}
\newcommand{\sbar}{\ensuremath{\overline{s}}}
\newcommand{\Kn}{\ensuremath{\overline{K_n}^\times}}
\newcommand{\tame}{\operatorname{tame}}
\newcommand{\Qpt}{\ensuremath{\Q_p^{\tame}}}
\newcommand{\Qpu}{\ensuremath{\Q_p^{\unr}}}
\newcommand{\scrT}{\ensuremath{\mathfrak{T}}}
\newcommand{\That}{\ensuremath{\hat{\mathfrak{T}}}}
\newcommand{\Gal}{\operatorname{Gal}}
\newcommand{\Aut}{\operatorname{Aut}}
\newcommand{\tors}{\operatorname{tors}}
\newcommand{\Zhat}{\hat{\Z}}
\newcommand{\linf}{\ensuremath{l_\infty}}
\newcommand{\Lie}{\operatorname{Lie}}
\newcommand{\GL}{\operatorname{GL}}
\newcommand{\End}{\operatorname{End}}
\newcommand{\aone}{\ensuremath{(a_1,\ldots,a_k)}}
\newcommand{\raone}{\ensuremath{r(a_1,\ldots,a_k,N)}}
\newcommand{\rtwoplus}{\ensuremath{\R^{2  +}}}
\newcommand{\rkplus}{\ensuremath{\R^{k +}}}
\newcommand{\length}{\operatorname{length}}
\newcommand{\Vol}{\operatorname{Vol}}
\newcommand{\cross}{\operatorname{cross}}
\newcommand{\GoN}{\Gamma_0(N)}
\newcommand{\GeN}{\Gamma_1(N)}
\newcommand{\GAG}{\Gamma \alpha \Gamma}
\newcommand{\GBG}{\Gamma \beta \Gamma}
\newcommand{\HGD}{H(\Gamma,\Delta)}
\newcommand{\Ga}{\mathbb{G}_a}
\newcommand{\Div}{\operatorname{Div}}
\newcommand{\Divo}{\Div_0}
\newcommand{\Hstar}{\cal{H}^*}
\newcommand{\txon}{\tilde{X}_0(N)}
\newcommand{\sep}{\operatorname{sep}}
\newcommand{\notp}{\not{p}}
\newcommand{\Aonek}{\mathbb{A}^1/k}
\newcommand{\Wa}{W_a/\mathbb{F}_p}
\newcommand{\Spec}{\operatorname{Spec}}
\newcommand{\MaxSpec}{\operatorname{MaxSpec}}

\newcommand{\abcd}{\left[ \begin{array}{cc}
a & b \\
c & d
\end{array} \right]}

\newcommand{\abod}{\left[ \begin{array}{cc}
a & b \\
0 & d
\end{array} \right]}

\newcommand{\unipmatrix}{\left[ \begin{array}{cc}
1 & b \\
0 & 1
\end{array} \right]}

\newcommand{\matrixeoop}{\left[ \begin{array}{cc}
1 & 0 \\
0 & p
\end{array} \right]}

\newcommand{\w}{\omega}
\newcommand{\Qpi}{\ensuremath{\Q(\pi)}}
\newcommand{\Qpin}{\Q(\pi^n)}
\newcommand{\pibar}{\overline{\pi}}
\newcommand{\pbar}{\overline{p}}
\newcommand{\lcm}{\operatorname{lcm}}
\newcommand{\trace}{\operatorname{trace}}
\newcommand{\OKv}{\mathcal{O}_{K_v}}
\newcommand{\Abarv}{\tilde{A}_v}
\newcommand{\kbar}{\overline{k}}
\newcommand{\Kbar}{\overline{K}}
\newcommand{\pl}{\rho_l}
\newcommand{\plt}{\tilde{\pl}}
\newcommand{\plo}{\pl^0}
\newcommand{\Du}{\underline{D}}
\newcommand{\A}{\mathbb{A}}
\newcommand{\D}{\underline{D}}
\newcommand{\op}{\operatorname{op}}
\newcommand{\Glt}{\tilde{G_l}}
\newcommand{\gl}{\mathfrak{g}_l}
\newcommand{\gltwo}{\mathfrak{gl}_2}
\newcommand{\sltwo}{\mathfrak{sl}_2}
\newcommand{\h}{\mathfrak{h}}
\newcommand{\tA}{\tilde{A}}
\newcommand{\sss}{\operatorname{ss}}
\newcommand{\X}{\Chi}
\newcommand{\ecyc}{\epsilon_{\operatorname{cyc}}}
\newcommand{\hatAl}{\hat{A}[l]}
\newcommand{\sA}{\mathcal{A}}
\newcommand{\sAt}{\overline{\sA}}
\newcommand{\OO}{\mathcal{O}}
\newcommand{\OOB}{\OO_B}
\newcommand{\Flbar}{\overline{\F_l}}
\newcommand{\Vbt}{\widetilde{V_B}}
\newcommand{\XX}{\mathcal{X}}
\newcommand{\GbN}{\Gamma_\bullet(N)}
\newcommand{\Gm}{\mathbb{G}_m}
\newcommand{\Pic}{\operatorname{Pic}}
\newcommand{\FPic}{\textbf{Pic}}
\newcommand{\solv}{\operatorname{solv}}
\newcommand{\Hplus}{\mathcal{H}^+}
\newcommand{\Hminus}{\mathcal{H}^-}
\newcommand{\HH}{\mathcal{H}}
\newcommand{\Alb}{\operatorname{Alb}}
\newcommand{\FAlb}{\mathbf{Alb}}
\newcommand{\gk}{\mathfrak{g}_k}
\newcommand{\car}{\operatorname{char}}
\newcommand{\Br}{\operatorname{Br}}
\newcommand{\gK}{\mathfrak{g}_K}
\newcommand{\coker}{\operatorname{coker}}
\newcommand{\red}{\operatorname{red}}
\newcommand{\CAY}{\operatorname{Cay}}
\newcommand{\ns}{\operatorname{ns}}
\newcommand{\xx}{\mathbf{x}}
\newcommand{\yy}{\mathbf{y}}
\newcommand{\E}{\mathbb{E}}
\newcommand{\rad}{\operatorname{rad}}
\newcommand{\Top}{\operatorname{Top}}
\newcommand{\Sym}{\operatorname{Sym}}
\newcommand{\Cl}{\operatorname{Cl}}
\newcommand{\Der}{\operatorname{Der}}
\newcommand{\Perm}{\operatorname{Perm}}


\begin{abstract}
We introduce the notion of a \textbf{graph derangement}, which naturally interpolates between perfect matchings and Hamiltonian cycles.  We give a necessary and sufficient condition for the existence of graph derangements on a locally finite graph.  This result was first proved by W.T. Tutte in 1953 by applying some deeper results on digraphs.  We give a new, simple proof which amounts to a reduction to the (Menger-Egerv\'ary-K\"onig-)Hall(-Hall) Theorem on transversals of set systems.  Finally, we consider the problem of classifying all cycle types of graph derangements on $m \times n$ checkerboard graphs.  Our presentation \emph{does not} assume any prior knowledge in graph theory or combinatorics: all definitions and proofs of needed theorems are given.
\end{abstract}

\section{Introduction}

\subsection{The Cockroach Problem}
\textbf{} \\ \\
Consider a square kitchen floor tiled by $25 = 5 \times 5$ square tiles in the usual manner.  Late one night the house's owner comes down to discover that the floor is crawling with cockroaches: in fact each square tile contains a single cockroach.  Displeased, she goes to the kitchen cabinet and pulls out an enormous can of roach spray.  The roaches sense what is coming and start skittering.  Each roach has enough time to skitter from its own tile to any adjacent tile.  But it will not be so good for two (or more) roaches to skitter to the same tile: that will make an obvious target.  Is it indeed possible for the roaches to perform a collective skitter in this way?
\\ \\
This is a nice problem to give to undergraduates: it is concrete, 
fun, and far away from what they think they should be doing 
in a math class.  

\subsection{Solution}
\textbf{} \\ \\
Suppose the tiles are painted black and white with a checkerboard pattern, 
and that the center square is black, so that there are $13$ black squares and $12$ white squares.  Therefore there are $13$ roaches 
who start out on black squares and are seeking a home on only $12$ white squares.  It is not possible -- no more so than for pigeons! -- for all 
$13$ cockroaches to end up on different white squares.  

\subsection{A problem for mathematicians}
\textbf{} \\ \\
For a grown mathematician (or even an old hand at mathematical brainteasers) this is not a very challenging problem, since 
the above parity considerations will quickly leap to mind.  Nevertheless 
there is something about it that encourages further contemplation.  There 
were several other mathematicians at the Normal Bar and they were paying 
attention too.  ``What about the $6 \times 6$ case?'' one of them asked.  
``It reminds me of the Brouwer fixed point theorem,'' muttered another.\footnote{Unfortunately, a connection with Brouwer's theorem is \emph{not} achieved here, but see \cite{Knill}.}   
\\ \\
One natural followup is to ask what happens for cockroaches 
on an $m \times n$ rectangular grid.  The preceding argument works when 
$m$ and $n$ are both odd.  On the other hand, if e.g. $m = n = 2$ it clearly is 
possible for the cockroaches to skitter, and already there are several different ways.  For instance, we could divide the rectangle into two dominos and have 
the roaches on each domino simply exchange places.
Or we could simply 
have them proceed in a (counter)clockwise cycle.  \\ \indent
Dominos are a good idea in general: if one of $m$ and $n$ is even, then an 
$m \times n$ rectangular grid may be tiled with dominos, and this gives a way for the roaches to skitter.  Just as above though this feels not completely satisfactory and one naturally looks for other skittering patterns: e.g. when $m = n = 4$ 
one can have the roaches in the inner $2 \times 2$ square skittering clockwise 
as before and then roaches in the outer ring of the square skittering around in 
a cycle: isn't that more fun?  There are many other skittering patterns as well.
\\ \\
I found considerations like the above to be rather interesting (and I will come 
back to them later), but for me the real problem was a bit more meta: what is the mathematical structure underlying the Cockroach Problem, and what is the general question being asked about this structure?
\\ \indent
Here we translate the Cockroach Problem into graph-theoretic terms.  In so doing we get a graph theoretic 
problem which in a precise sense interpolates between two famous and classic problems: existence of \textbf{perfect matchings} and existence of \textbf{Hamiltonian cycles}.  On the other hand, the more general problem does 
not seem to be well known.  But it's interesting, and we present it here: it is the existence and classification of \textbf{graph derangements}.

\section{Graph derangements and graph permutations}

\subsection{Basic definitions}
\textbf{} \\ \\ \noindent
Let $G = (V,E)$ be a simple, undirected graph: that is, we 
are given a finite set $V$ of \textbf{vertices} and a set $E$ of \textbf{edges}, which are unordered pairs of distinct elements of $V$.  For $v_1,v_2 \in V$, we 
say that $v_1$ and $v_2$ are \textbf{adjacent} if $\{v_1,v_2\} \in E$ and 
write $v_1 \sim v_2$.  In other words, for a set $V$, to give 
a graph with vertex set $V$ is equivalent to giving an anti-reflexive, symmetric 
binary relation on $V$, the \textbf{adjacency relation}.  A variant formalism is also useful: we may think of a 
graph as a pair of sets $(V,E)$ and an \textbf{incidence relation} 
on $V \times E$.  Namely, for 
$x \in V$ and $e \in E$, $x$ is \textbf{incident} to $e$ if $x \in e$, or, 
less formally, if $x$ is one of the two vertices comprising the endpoints of $e$.  If one knows the incidence relation as a subset of $V \times E$ 
then one knows in particular for each $e \in E$ the pair of vertices $\{v_1,v_2\}$ which are incident to $E$ and thus one knows the graph $G$.
\\ \\
For $G = (V,E)$ and $v \in V$, the \textbf{degree} of $v$ is the number of edges which are incident to $v$.  A degree zero vertex is \textbf{isolated}; a degree one vertex is \textbf{pendant}.
\\ \\
A graph is \textbf{finite} if its vertex set is finite (and hence its edge set is finite as well).  A graph is \textbf{locally finite} if every vertex has finite degree.  
\\ \\
If $G = (V,E)$ and $G' = (V,E')$ are graphs with $E' \subset E$, 
we say $G'$ is an \textbf{edge subgraph} of $G$: it has the same underlying vertex set as $G$ and is obtained from $G$ 
by removing some edges.  If $G = (V,E)$ and $G' = (V',E')$ are finite graphs with $V' \subset V$ and $E'$ the set of all elements of $E$ linking two 
vertices in $V'$, we say $G'$ is an \textbf{induced subgraph} of $G$.  
\\ \\
For a graph $G = (V,E)$, a subset $X \subset V$ is \textbf{independent} 
if for no $x_1,x_2 \in X$ do we have $x_1 \sim x_2$.
\\ \\
Example 2.1: For $m,n \in \Z^+$, we formally define the \textbf{checkerboard graph} $R_{m,n}$.  Its vertex set is $\{1,\ldots,m\} \times \{1,\ldots,n\}$ and we decree that $(x_1,x_2) \sim (y_1,y_2)$ if $|x_1-y_1| + |x_2 - y_2| = 1$.  
\\ \\
Example 2.2: More generally, for $n \in \Z^+$ and $m_1,\ldots,m_n \in \Z^+$ 
we may define an $n$-dimensional analogue $R_{m_1,\ldots,m_n}$ of $R_{m,n}$.  Its vertex set 
is $\{1,\ldots,m_1\} \times \ldots \times \{1,\ldots,m_n\}$ and we 
decree that $(x_1,\ldots,x_m) \sim (y_1,\ldots,y_m)$ if $\sum_{i=1}^m 
|x_i - y_i| = 1$.
\\ \\
Example 2.3: For $m \geq 2, n \geq 1$ we define the \textbf{M\"obius 
checkerboard gaph} $M_{m,n}$.  This is a graph with the same vertex set as $R_{m,n}$ and having edge set consisting of all the edges of $R_{m,n}$ together 
with $(x,n) \sim (x,1)$ for all $1 \leq x \leq m$.  We put $C_m = M_{m,1}$, 
the \textbf{cycle graph}.  The checkerboard graph $R_{m,n}$ is an edge 
subgraph of the M\"obius checkerboard graph $M_{m,n}$.  
\\ \\
Example 2.4: For $m,n \geq 2$ we define the \textbf{torus checkerboard graph} 
$T_{m,n}$.  Again it has the same vertex set as $R_{m,n}$ and contains all 
of the edges of $R_{m,n}$ together with the following ones: for all $1 \leq x \leq m$, $(x,n) \sim (x,1)$ and for all $1 \leq y \leq n$, $(m,y) \sim (1,y)$.  
The M\"obius checkerboard graph $M_{m,n}$ is an edge subgraph of the 
torus checkerboard graph $T_{m,n}$.    
\\ \\
For a graph $G = (V,E)$ and $x \in V$, we define the \textbf{neighborhood
of x} as $ N_x = \{y \in V \ | \ x \sim y \}$.  More generally, for any subset $X \subset V$ we define the \textbf{neighborhood of X} as $N(X) = \{y \in V \ | \ \exists x \in X \text{ such that } x \sim y \} = \bigcup_{x \in X} N_x$.
\\ \\
Remark 2.5: Although $x \notin N_x$, $X$ and $N(X)$ \emph{need not} be disjoint.  
In fact, $X \cap N(X) = \varnothing$ iff $X$ is an independent set.   
\\ \\
Now the ``cockroach skitterings'' that 
we were asking about on $R_{m,n}$ can be enunciated in terms of any graph  as follows.
\\ \\
Let $G = (V,E)$ be a graph.  A \textbf{graph derangement} of $G$ 
is an injection $f: V \ra V$ with $f(v) \sim v$ for all $v \in V$.  Let $\Der G$ be the set of all graph derangements of $G$.  
\\ \\
It is natural to also consider a slightly more general definition.
\\ \\
For a graph $G = (V,E)$, a \textbf{graph permutation} of $G$ 
is an injection $f: V \ra V$ such that for all $v \in V$, either $f(v) \sim v$ 
or $f(v) = v$.  Let $\Perm G$ be the set of all graph permutations of $G$.  Thus a graph derangement is precisely a graph permutation which is fixed point 
free: for all $v \in V$, $f(v) \neq v$. 
\begin{lemma}
\label{2.0}
Let $G = (V,E)$ be a graph.  \\
a) If $G$ has an isolated vertex, $\Der G = \varnothing$.  \\
b) If $G$ has two pendant vertices adjacent to a common vertex, $\Der G = \varnothing$.  
\end{lemma}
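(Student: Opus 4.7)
Both parts follow directly from unwinding the definition of a graph derangement, so the plan is simply to identify, in each case, the vertex (or vertices) whose adjacency constraints cannot be satisfied simultaneously with injectivity.

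For part (a), the plan is to let $v$ be an isolated vertex and observe that $N_v = \varnothing$. Any graph derangement $f$ would have to satisfy $f(v) \sim v$, i.e.\ $f(v) \in N_v$, which is impossible. Hence no such $f$ exists.

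For part (b), the plan is to let $v_1, v_2$ be distinct pendant vertices both adjacent to some common vertex $w$. Since each $v_i$ has degree one, we have $N_{v_1} = N_{v_2} = \{w\}$. The derangement condition $f(v_i) \sim v_i$ then forces $f(v_1) = w = f(v_2)$, contradicting the injectivity of $f$. So $\Der G = \varnothing$.

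Neither step presents any real obstacle; the argument is essentially a pigeonhole observation and the only thing to be careful about is cleanly separating the two forced equalities from the injectivity hypothesis on $f$. This lemma is really meant as a warmup, identifying the two most obvious local obstructions to the existence of a graph derangement; the substantive question (addressed later) is whether suitable global hypotheses rule out all obstructions, and this will be where the reduction to Hall's theorem advertised in the abstract enters.
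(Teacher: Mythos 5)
Your argument is correct and complete; the paper itself leaves this lemma as an exercise, and your pigeonhole observations for (a) and (b) are exactly the intended elementary argument. Nothing to add.
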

\begin{proof}
Left to the reader as an exercise to get comfortable with the definitions.
\end{proof}
\noindent
Remark 2.6: If $G'$ is an edge subgraph of $G$, any graph derangement (resp. graph permutation) $\sigma$ of $G'$ is also a graph derangement (resp. graph permutation) of $G$.  
%

\subsection{Cycles and surjectivity}
\textbf{} \\ \\ \noindent
Given a graph $G$, we wish not only to decide whether $\Der G$ is nonempty but also to study its structure.  The collection of all derangements on a given graph is 
likely to be a very complicated object: consider for instance $\Der K_n$, 
which has size asymptotic to $\frac{n!}{e}$.  Just as in the case of ordinary 
permutations and derangements, it seems interesting to study the possible 
\textbf{cycle types} of graph derangements and graph permutations on a given graph $G$.  Let us give careful definitions of these.
\\ \indent 
First, let $V$ be a set and $f: V \ra V$ an function.  For $m \in \Z^+$, 
let $f^m = f \circ \ldots \circ f$ be the \emph{mth iterate} of $f$.  We introduce a 
relation $\approx$ on $V$ as follows: for $x,y \in V$, $x \approx y$ iff 
there are $m,n \in \Z^+$ such that $f^m x = f^n y$.  This is an equivalence relation on $V$: the reflexivity and the symmetry are 
immediate, and as for the transitivity: if $x,y,z \in V$ are such that 
$x \approx y$ and $y \approx z$ then there are $a,b,c,d \in \Z^+$ with 
$f^a x = f^b y$ and $f^c y = f^d z$, and then 
\[f^{a+c} x = f^c \circ f^a x = f^c \circ f^b y =  f^{b+c} y = f^b \circ f^c y =  f^b \circ 
f^d z = f^{b+d} z. \]
Now suppose $f$ is injective: we now call the $\approx$-equivalence classes \textbf{cycles}.  Let $x \in V$, and denote the cycle containing $x$ by $C_x$.  
Then: \\
$\bullet$ $C_x$ is \textbf{finite} iff $x$ lies in the image of $f$ and there is 
$m \in \Z^+$ such that $f^m x = x$.  \\
$\bullet$ $C_x$ is \textbf{singly infinite} iff it is infinite and there are 
$y \in V$, $m \in \Z^+$ such that $f^m y = x$ and $y$ is not in the image 
of $f$.  \\
$\bullet$ $C_x$ is \textbf{doubly infinite} iff it is infinite and every $y \in C_x$ lies in the image of $f$.\\  
These cases are mutually exclusive and exhaustive, so 
$f$ is surjective iff there are no singly infinite cycles.  Suppose $G = (V,E)$ is a graph and $f \in \Perm G$.  We can define the 
\textbf{cycle type} of $f$ as a map from the set of 
possible cycle types into the 
class of cardinal numbers.  When $V$ is finite, this amounts to a partition of 
$\# V$ in the usual sense: e.g. the cycle type of roaches skittering counterclockwise on a $5 \times 5$ grid is $(1,8,16)$.  A graph permutation is a 
graph derangement if it has no $1$-cycles.  We will say a graph derangement is \textbf{matchless} if it has no $2$-cycles.

\begin{prop}
\label{ET2}
Suppose that a graph $G$ admits a graph derangement.  Then $G$ admits a surjective graph derangement.
\end{prop}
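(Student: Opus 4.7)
The plan is to begin with any $f \in \Der G$ and modify it, one cycle at a time, into a surjective graph derangement. The paper has already observed that $f$ is surjective precisely when it has no singly infinite cycles, so my only work is to repair each singly infinite cycle of $f$ without disturbing the rest.

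First I would establish the following structural description of a singly infinite cycle $C$ of $f$: there is a unique \emph{source} vertex $y \in C$ with $y \notin \im(f)$, and $C = \{y, f(y), f^2(y), \ldots\}$, the listed elements all being distinct. Both claims come from injectivity of $f$. Two sources $y, y'$ in the same cycle would satisfy $f^m(y) = f^n(y')$ for some $m, n \in \Z^+$, and canceling factors of $f$ via injectivity forces $y$ to be a positive iterate of $y'$ or vice versa, contradicting absence from $\im(f)$; similarly $f^i(y) = f^j(y)$ with $i < j$ would give $y = f^{j-i}(y) \in \im(f)$. Moreover, because $f$ is a graph derangement, the consecutive vertices $f^i(y)$ and $f^{i+1}(y)$ of $C$ are adjacent in $G$.

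Now I would define $g \colon V \to V$ by setting $g = f$ on every finite and every doubly infinite cycle (where $f$ already restricts to a bijection), and, on each singly infinite cycle $\{y, f(y), f^2(y), \ldots\}$, letting $g$ be the involution swapping $f^{2i}(y)$ with $f^{2i+1}(y)$ for each $i \geq 0$. Each such swap is legitimate because consecutive elements of the cycle are adjacent in $G$. Verification is routine: the cycles of $f$ partition $V$ and each is $g$-invariant with $g$ acting bijectively, so $g$ is a bijection of $V$; $g(v) \sim v$ everywhere (inherited from $f$, or from the adjacency $f^i(y) \sim f^{i+1}(y)$); and $g(v) \neq v$ everywhere (since $f$ has no fixed points, and a swap of two distinct vertices has none). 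Thus $g$ is a surjective graph derangement. The only (mild) obstacle is extracting the ``source plus forward orbit'' structure of a singly infinite cycle; once that is in hand, the pairing construction is immediate.
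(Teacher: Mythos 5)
Your proof is correct and follows essentially the same route as the paper: the paper likewise identifies each singly infinite cycle with the shift $n \mapsto n+1$ on $\Z^+$ and replaces it by the $2$-cycles $1 \leftrightarrow 2,\ 3 \leftrightarrow 4, \ldots$, leaving the finite and doubly infinite cycles alone. You simply spell out in more detail the ``unique source plus forward orbit'' structure that the paper takes for granted.
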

\begin{proof} 
It is sufficient to show that any graph derangement can be modified to yield a graph derangement with no singly infinite cycles, and for that it suffices to 
consider one singly infinite cycle, which may be 
viewed as the derangement $n \mapsto n+1$ on the graph $\Z^+$ with $n \sim n+1$ for all $n \in \Z^+$.  This derangement can be decomposed 
into an infinite union of $2$-cycles: $1 \leftrightarrow 2, \ 3 \leftrightarrow 4,  \ldots, 2n-1 \leftrightarrow 2n,  \ldots$.
\end{proof}

\subsection{Disconnected Graphs}

\begin{prop}
\label{2.3}
Let $G$ be a graph with components $\{G_i\}_{i \in I}$, and let $f \in \Perm G$.  \\
a) For all $i \in I$, $f(G_i) = G_i$.  \\
b) Conversely, given graph permutations $f_i$ on each $G_i$, $f = \bigcup_{i \in I} f_i: G \ra G$ is a graph permutation.  Moreover $f \in \Der G \iff f_i \in \Der G_i$ for all $i \in I$. 
\end{prop}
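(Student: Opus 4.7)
The plan is to prove both parts by exploiting the fact that the graph permutation condition at a vertex $v$ only references vertices in the same connected component as $v$, so everything will reduce to component-by-component bookkeeping.

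For part (a), I would fix $i \in I$ and a vertex $v \in V(G_i)$. By the definition of a graph permutation, either $f(v) = v$ or $f(v) \sim v$; in both cases $f(v)$ lies in the same connected component of $G$ as $v$, namely $G_i$. This immediately gives the inclusion $f(V(G_i)) \subseteq V(G_i)$. For the reverse inclusion needed for the equality $f(G_i) = G_i$, I would invoke injectivity of $f$: since $V = \bigsqcup_{i \in I} V(G_i)$ is a disjoint union, the restriction $f|_{V(G_i)} \colon V(G_i) \to V(G_i)$ is an injection, and when $G_i$ is finite, pigeonhole upgrades this to a bijection.

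For part (b), given $f_i \in \Perm G_i$ for each $i$, I would define $f \colon V \to V$ by $f(v) = f_i(v)$ whenever $v \in V(G_i)$; this is well-defined because the vertex sets $V(G_i)$ partition $V$. Verifying $f \in \Perm G$ then splits into three pieces. First, the permutation condition at $v \in V(G_i)$ is inherited from $f_i$ since edges of $G_i$ are edges of $G$. Second, for injectivity: if $f(v) = f(w)$, the common value lies in a unique component $G_i$, and part (a) applied to each $f_j$ forces $v, w \in V(G_i)$, whence $f_i(v) = f_i(w)$ and injectivity of $f_i$ gives $v = w$. Third, because $f(v) = v$ iff $f_i(v) = v$ for the unique $i$ with $v \in V(G_i)$, the map $f$ is fixed-point free iff every $f_i$ is, yielding the derangement equivalence $f \in \Der G \iff f_i \in \Der G_i$ for all $i$.

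The proof is essentially routine; the conceptual core is the single observation at the start of (a) that the adjacency clause in the definition of a graph permutation is exactly what forces component preservation. The one genuinely delicate point is the reverse inclusion $V(G_i) \subseteq f(V(G_i))$ when $G_i$ is an infinite component, which is not a consequence of injectivity alone. Since only the component-preservation content of (a) is used in (b) and in the sequel, this is not a real obstacle, but it is worth flagging.
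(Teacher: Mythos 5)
Your proof is correct and is precisely the routine component-by-component argument that the paper dismisses with ``The proof is immediate,'' so there is nothing to compare at the level of method. Your closing caveat is well taken and is in fact a small correction to the statement as written: for an infinite component the literal set equality $f(G_i) = G_i$ can fail (e.g.\ $n \mapsto n+1$ on the connected graph with vertex set $\Z^+$ and $n \sim n+1$ is a graph derangement whose image omits $1$), so only the containment $f(V(G_i)) \subseteq V(G_i)$ --- which is all that part b) and the rest of the paper actually use --- holds in general.
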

\noindent
The proof is immediate.  Thus we may as well restrict attention to connected graphs.

\subsection{Bipartite Graphs}
\textbf{} \\ \\ \noindent
A \textbf{bipartition} of a graph $G = (V,E)$ is a partition $V = V_1 \coprod V_2$ of the vertex set such that each $V_i$ is an independent set.  A graph is \textbf{bipartite} if it admits at least one bipartition.
\\ \\  
For $k \in \Z^+$, a \textbf{k-coloring} of a graph $G = (V,E)$ is a map 
$C: V \ra \{1,\ldots,k\}$ such that for all $x,y \in V$, $x \sim y \implies C(x) \neq C(y)$.  There is a bijective correspondence 
between $2$-colorings of $G$ and bipartitions of $G$: given a $2$-coloring 
$C$ we define $V_i = \{x \in V \ | \ C(x) = i \}$, and given a bipartition 
we define $C(x) = i$ if $x \in V_i$.  Thus a graph is bipartite 
iff it admits a $2$-coloring.
\\ \\
Remark 2.7: For a graph $G = (V,E)$, a map $C: V \ra \{0, 1 \}$ is a 
$2$-coloring of $G$ iff its restriction to each connected component 
$G_i$ is a $2$-coloring of $G_i$.  It follows that a graph is bipartite 
iff all of its connected components are bipartite.
\\ \\
Remark 2.8: Any subgraph $G'$ of a bipartite graph $G$ is bipartite.  Indeed, 
any $2$-coloring of $G$ restricts to a $2$-coloring of $G'$.
\\ \\
Example 2.9: The cycle graph $C_m$ is bipartite iff $m$ is even.  

\begin{cor}
\label{2.5}
Let $G$ be a bipartite graph, and let 
$\sigma \in Der G$.  Then every finite cycle of $\sigma$ has even length.
\end{cor}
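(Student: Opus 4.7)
The plan is to use a $2$-coloring of $G$ (which exists by bipartiteness, via the bijection between $2$-colorings and bipartitions described in the text) and track how the color changes along the cycle. By the definition of a graph derangement, consecutive vertices in a cycle are adjacent in $G$, so they must receive different colors; a finite cycle then forces the number of color-changes around the cycle to be zero modulo $2$.

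More precisely, I would begin by fixing a $2$-coloring $C: V \to \{0,1\}$ of $G$. Let $C_x$ be a finite cycle of $\sigma$ of length $k$, and choose a representative $x \in C_x$. By the characterization of finite cycles given just before the proposition, $\sigma^k(x) = x$, and $C_x = \{x, \sigma(x), \sigma^2(x), \ldots, \sigma^{k-1}(x)\}$ with these elements distinct. Write $v_i = \sigma^i(x)$ for $0 \leq i \leq k$, so $v_0 = v_k = x$.

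Now, since $\sigma$ is a graph derangement, $v_{i+1} = \sigma(v_i) \sim v_i$ for each $i$, so by the defining property of a $2$-coloring, $C(v_{i+1}) \neq C(v_i)$. A simple induction then yields $C(v_i) \equiv C(v_0) + i \pmod{2}$. Setting $i = k$ and using $v_k = v_0$ gives $C(v_0) \equiv C(v_0) + k \pmod 2$, hence $k \equiv 0 \pmod 2$, as desired.

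There is no real obstacle: the argument is the standard ``closed walks in bipartite graphs have even length,'' adapted to cycles of a graph derangement. The only point requiring a bit of care is ensuring that the $v_i$ for $0 \leq i < k$ are genuinely distinct so that $k$ is the actual cycle length rather than some multiple of it, but this is immediate from injectivity of $\sigma$ together with the fact that $k$ was taken to be the minimal positive integer with $\sigma^k(x) = x$ (which is the content of the finite-cycle characterization above).
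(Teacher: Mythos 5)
Your proof is correct and is essentially the paper's argument made explicit: the paper's one-line proof observes that a finite cycle of $\sigma$ is an odd cycle subgraph of $G$, which cannot exist in a bipartite graph, and your color-alternation computation along $v_0, v_1, \ldots, v_k$ is exactly the standard verification of that fact. Your added care about the minimality of $k$ and the distinctness of the $v_i$ is a reasonable (and correct) expansion of details the paper leaves to its earlier characterization of finite cycles.
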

\begin{proof} Since a subgraph of a bipartite graph is bipartite, a 
bipartite graph cannot admit a cycle of odd finite degree.\footnote{Conversely, a graph with no cycles of odd degree is bipartite, a famous result 
of D. K\H onig.}
\end{proof}

\subsection{Some Examples}
\textbf{} \\ \\ \noindent
Example 2.10: If $G = K_n$ is the complete graph on the vertex set $[n] = \{1,\ldots,n\}$, then a graph permutation of $G$ is nothing else than a permutation of $[n]$.  A derangement of $K_n$ is a derangement in the usual 
sense, i.e., a fixed-point free permutation.  Derangements exist 
iff $n > 1$ and the number of them is asymptotic to $\frac{n!}{e}$ as $n \ra \infty$.
\\ \\
Example 2.11: \\
a) The $n$-dimensional checkerboard graph $R_{m_1,\ldots,m_n}$ 
admits a graph derangement iff $m_1,\ldots,m_n$ are not all odd.  \\
b) The M\"obius checkerboard graphs $M_{m,n}$ all admit graph derangements.  Hence, by Remark 2.6, so do the torus checkerboard graphs $T_{m,n}$.  \\
c) For odd $n$, the square checkerboard graph $R_{n,n}$ admits graph permutation with a single fixed point.  For instance, by dividing the square into concentric rings we get a graph permutation with cycle type $\{1,8,16,\ldots,8 \left(\frac{n-1}{2} \right)\}$.  
%
%

\section{Existence Theorems}

\subsection{Halls' Theorems}
\textbf{} \\ \\ \noindent
The main tool in all of our Existence Theorems is a truly basic result of 
combinatorial theory.  There are several (in fact, notoriously many) equivalent 
versions, but for our purposes it will be helpful to single out two different 
formulations.

\begin{thm}(Halls' Theorem: Transversal Form)
\label{HMT}
\label{2.7}
Let $V$ be a set and $\mathcal{S} = \{S_i\}_{i \in I}$ be an indexed family of finite subsets of $V$.  The following are equivalent: \\
(i) (Hall Condition) For every finite subfamily $J \subset I$, 
\[ \# J \leq \# \bigcup_{i \in J} S_i. \]
(ii) $(V,I)$ admits a \textbf{transversal}: a subset $X \subset V$ 
and a bijection $f: X \ra I$ such that for all $x \in X$, $x \in S_{f(x)}$.
\end{thm}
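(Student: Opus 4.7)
The plan is to prove both implications, with $(ii) \Rightarrow (i)$ being immediate and $(i) \Rightarrow (ii)$ requiring an induction in the finite case followed by a compactness argument to extend to infinite index sets.

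The direction $(ii) \Rightarrow (i)$ is a one-liner: if $(X, f)$ is a transversal and $J \subset I$ is finite, then $f^{-1}(J) \subset X$ has exactly $\#J$ elements and is contained in $\bigcup_{i \in J} S_i$.

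For $(i) \Rightarrow (ii)$ with $I$ finite, I would argue by induction on $n = \#I$. The base case $n = 1$ says $S_1 \neq \varnothing$, so pick any element. For the inductive step, I would split into two cases. In the strict case, where $\#J < \#\bigcup_{i \in J} S_i$ for every nonempty proper $J \subsetneq I$, pick any $i_0 \in I$ and any $x_0 \in S_{i_0}$, and consider the reduced family $\{S_i \setminus \{x_0\}\}_{i \neq i_0}$: deleting one element drops each union by at most one, so the strict inequality on the original family yields Hall's condition on the reduced family, and induction supplies the rest. In the equality case, where some nonempty proper $J_0 \subsetneq I$ satisfies $\#J_0 = \#\bigcup_{i \in J_0} S_i$, apply induction to $\{S_i\}_{i \in J_0}$ to get a transversal $X_0$ (necessarily exhausting $\bigcup_{i \in J_0} S_i$), then apply induction again to $\{S_i \setminus X_0\}_{i \in I \setminus J_0}$; Hall's condition for this reduced family follows from the computation $\#\bigcup_{i \in K}(S_i \setminus X_0) = \#\bigcup_{i \in K \cup J_0} S_i - \#J_0 \geq \#(K \cup J_0) - \#J_0 = \#K$.

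The main remaining obstacle is the infinite case, which I would reduce to the finite case by a Tychonoff compactness argument. Give each $S_i$ the discrete topology and form the compact product $P = \prod_{i \in I} S_i$. For each unordered pair of distinct indices $\{i,j\}$, set $C_{i,j} = \{\varphi \in P : \varphi(i) \neq \varphi(j)\}$, which is closed because each $S_i$ is finite and discrete. To verify the finite intersection property for $\{C_{i,j}\}$, collect the finitely many indices appearing in any finite subcollection into a finite $J \subset I$; the finite case of Hall, already proved, yields an injection $\psi: J \to V$ with $\psi(i) \in S_i$, and extending $\psi$ arbitrarily to $I$ produces an element common to the chosen $C_{i,j}$'s. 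Compactness then furnishes $\varphi \in \bigcap_{i \neq j} C_{i,j}$; such a $\varphi$ is an injective choice function, and setting $X = \varphi(I)$ with $f = \varphi^{-1}: X \to I$ gives the desired transversal.
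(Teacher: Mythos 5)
Your proof is correct and follows essentially the same path as the paper's: the Halmos--Vaughan induction on the finite index set (with the same split into the ``strict'' and ``equality'' cases) followed by a Tychonoff compactness argument on the product $\prod_{i \in I} S_i$ of finite discrete spaces. The only cosmetic differences are that the paper proves the marriage form first and translates to the transversal form afterward, and that its closed sets enforce injectivity on each finite subset of indices rather than on each pair, neither of which changes the substance.
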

\noindent
We will deduce Theorem \ref{HMT} from the following reformulation.

\begin{thm}(Halls' Theorem: Marriage Form)
\label{HMTII}
Let $G = (V_1,V_2,E)$ be a bipartitioned graph in which every vertex in $V_1$ has finite degree.  TFAE: \\
(i) (Cockroach Condition) For every finite subset of $V_1$, $\# V_1 \leq \# N(V_1)$. \\
(ii) There is a \textbf{semiperfect matching}, that is an injection $\iota: V_1 \ra V_2$ such that for all $x \in V_1$, $x \sim \iota(x)$.
\end{thm}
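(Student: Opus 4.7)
The plan is to settle the trivial direction (ii) $\Rightarrow$ (i) by observation, then prove (i) $\Rightarrow$ (ii) first for finite $V_1$ by induction, and finally extend to arbitrary $V_1$ via a compactness argument exploiting that each $N_v$ is finite. The direction (ii) $\Rightarrow$ (i) is immediate: if $\iota: V_1 \to V_2$ is a semiperfect matching and $F \subseteq V_1$ is finite, then $\iota(F) \subseteq N(F)$ and $\# F = \# \iota(F) \leq \# N(F)$.

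For the finite case of (i) $\Rightarrow$ (ii), I would induct on $\# V_1$, splitting the inductive step into two cases. In the \emph{slack case}, every nonempty proper $S \subsetneq V_1$ satisfies $\# N(S) \geq \# S + 1$: pick any $v \in V_1$ and any $w \in N_v$, match $v \mapsto w$, and verify that passing to $(V_1 \setminus \{v\}, V_2 \setminus \{w\})$ preserves the Hall condition, since removing a single vertex from $V_2$ decreases each neighborhood by at most one. In the \emph{tight case}, some nonempty proper $S \subsetneq V_1$ satisfies $\# N(S) = \# S$: apply induction to the bipartite subgraph on $(S, N(S))$ (where Hall for a $T \subseteq S$ follows because $N(T) \subseteq N(S)$) to get $\iota_1: S \to N(S)$, then apply induction to $(V_1 \setminus S, V_2 \setminus N(S))$, checking Hall there by the identity $\#(N(T) \setminus N(S)) = \# (N(S) \cup N(T)) - \# N(S) \geq \# (S \cup T) - \# S = \# T$ for $T \subseteq V_1 \setminus S$, where the inequality uses Hall on $S \cup T$ in the original graph.

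For the general case, I would invoke Tychonoff. Since every $v \in V_1$ has finite degree, $N_v$ is a finite (discrete) space; let $X = \prod_{v \in V_1} N_v$, a compact space in the product topology. For each unordered pair $\{v,v'\}$ of distinct vertices of $V_1$, let
\[ D_{v,v'} = \{\phi \in X : \phi(v) \neq \phi(v') \}, \]
which is clopen since it depends on the two coordinates $v, v'$ only. An element of $\bigcap_{v \neq v'} D_{v,v'}$ is exactly a semiperfect matching. To invoke compactness I need the finite intersection property: given finitely many pairs, all contained in some finite $F \subset V_1$, the Hall condition restricted to $F$ holds, so the finite case produces a semiperfect matching $\psi: F \to V_2$; extending $\psi$ arbitrarily over $V_1 \setminus F$ yields an element of $X$ lying in all the $D_{v,v'}$ with $\{v,v'\} \subseteq F$.

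The main obstacle is the last step: the finite-case induction is routine combinatorics, but passing to infinite $V_1$ genuinely requires a compactness principle. The finite-degree hypothesis on $V_1$ is exactly what makes each factor $N_v$ compact, so Tychonoff is the natural tool; the only subtle point is verifying the finite intersection property by reducing to finite induced sub-bipartitions, which is clean because the restricted Hall condition is simply a special case of the given one.
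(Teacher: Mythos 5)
Your proposal is correct and follows essentially the same route as the paper: the finite case is the Halmos--Vaughan induction with the identical slack/tight dichotomy, and the infinite case is the same Tychonoff argument on $\prod_{v \in V_1} N_v$ with the finite intersection property supplied by the finite case. The only cosmetic difference is that you index the closed sets by pairs $\{v,v'\}$ while the paper indexes them by finite subsets $X \subseteq V_1$, which amounts to the same intersection.
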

\begin{proof} \cite{Halmos-Vaughan} Step 1: Suppose $V_1$ is finite.  We go by induction on $\# V$.  The case $\# V_1 = 1$ is trivial.  Now suppose that $\# V_1 = n > 1$ and that the result holds for all bipartitioned graphs with first vertex set of cardinality smaller than $n$.  It will be notationally convenient 
to suppose that $V_1 = \{1,\ldots,n\}$, and we do so.  
\\
Case 1: Suppose that for all $1 \leq k < n$, every $k$-element subset of $V_1$ 
has at least $k+1$ neighbors.  Then we may match $n$ to any element of $V_2$ and 
semiperfectly match $\{1,\ldots,n-1\}$ into the remaining elements of $V_2$ by induction.  \\
Case 2: Otherwise, for some $k$, $1 \leq k < n$, there is a $k$-element subset 
$X \subset V_1$ such that $\# N(X) = k$.  The subset $X$ may be semiperfectly 
matched into $V_2$ by induction, say via $\iota_1: X \ra V_2$, so it suffices to show that the Hall Condition still holds on the induced bipartitioned subgraph on $(V_1 \setminus X,V_2 \setminus \iota_1(X))$.  Indeed, if not, then for 
some $h$, $1 \leq h \leq n-k$, there would be an $h$-element subset $Y \subset V_1 \setminus X$ having fewer than $h$ neighbors in $V_2 \setminus \iota_1(X))$, 
but then \[\# N(X \cup Y) = \# (N(X) \cup N(Y)) \leq \# N(X) + \# N(Y) < k + h = 
\# (X \cup Y). \]
Step 2: Suppose $V_1$ is infinite.  For $x \in V_1$, endow $N_x$ with the discrete topology; endow 
$N = \prod_{x \in V_1} N_x$ with the product topology.  Each $N_x$ is finite hence compact, so $N$ is compact by Tychonoff's Theorem.  For any finite subset $X \subset V_1$, let \[H_X = \{ n = \{n_i\} \in N \ | \ 
n_{x} \neq n_{y} \ \forall x \neq y \in X\}. \]
Then $H_X$ is closed in $N$ and is nonempty by Step 1.  
Since $G$ is compact, there is $n \in \bigcap_X H_X$, and any such $n$ is a semiuperfect 
matching.  
\end{proof}
\noindent
Remark 3.1: Theorems \ref{2.7} and \ref{HMTII} are \emph{equivalent results}: \\ \indent
Assume Theorem \ref{2.7}.  In the setting of Theorem \ref{HMTII} take 
$V = V_2$, $I = V_1$ and $\mathcal{S} = \{N_x\}_{x \in I}$.  The local finiteness of the graph means each element of $\mathcal{S}$ is finite, and the assumed Cockroach Condition is precisely the Hall Condition, so by Theorem \ref{2.7} there is $X \subset V_2$ and a bijection $f: X \ra I$ such that
$x \in X \implies f(x) \in N_x$.  Let $\iota = f^{-1}: V_1 \ra X$.  Then for $y \in V_1$, let $y = \iota(x)$, so $x = f(y) \sim y = \iota(x)$.  \\ \indent
Assume Theorem \ref{HMTII}.  In the setting of Theorem \ref{2.7} 
take $V_1 = I$, $V_2 = V$, and $E$ the set of pairs $(i,x)$ such that $x \in S_i$.  Since each $S_i$ is finite, the graph is locally finite, and the assumed 
Hall Condition is precisely the Cockroach Condition, so by Theorem \ref{HMTII} 
there is a semiperfect matching $\iota: I \ra V$.  Let $X = f(I)$, and let $f: X \ra I$ be the inverse function.  For $x \in X$, if $i = f(x)$, $x = 
\iota(i)$, so $x \in S_i = S_{f(x)}$.  
\\ \\
Remark 3.2: Theorem \ref{2.7} was first proved for finite $I$ by Philip Hall 
\cite{Hall}.  Eventually it was realized that equivalent or stronger versions of 
P. Hall's Theorem had been proven earlier by Menger \cite{Menger}, Egerv\'ary \cite{Egervary} and K\H onig \cite{Konig31}.  The matrimonial interpretation was introduced some years later by Halmos and Vaughan \cite{Halmos-Vaughan}.  Nevertheless, with typical disregard for history the most common name for the finite form 
of either Theorem \ref{HMT} or \ref{HMTII} is \emph{Hall's Marriage Theorem}.
\\ \\
Remark 3.3: The generalization to arbitrary index sets was given by Marshall Hall, Jr. \cite{Hall48}, whence ``Halls' Theorem'' (i.e., the theorem of more than one Hall). 
\\ \\
Remark 3.4: M. Hall, Jr.'s argument used Zorn's Lemma, which is equivalent to the Axiom of Choice (AC).  The proof supplied above 
uses Tychonoff's Theorem, which is also equivalent to (AC) \cite{Kelley50}.  However, all of our spaces are Hausdorff.  By examining the proof of Tychonoff's Theorem using ultrafilters \cite{Cartan37},
one sees that when the spaces are Hausdorff, by the uniqueness of limits one does not need (AC) but only that every filter can be extended to an ultrafilter (UL).  In turn, (UL) 
is equivalent to the fact that every Boolean ring has a prime ideal (BPIT).  (BPIT) is known to be \emph{weaker} than (AC), hence Halls' Theorem cannot imply (AC). 
\begin{quest}
\label{BPITQUEST}
Does Halls' Theorem imply the Boolean Prime Ideal Theorem?
\end{quest} 
\noindent
Remark 3.5: The use of compactness of a product of finite, discrete spaces  is a clue for the \emph{cognoscenti} that it should be possible to find a nontopological proof using the Compactness Theorem from model theory.  The reader who is interested and knowledgeable about such 
things will enjoy doing so.  The Compactness Theorem 
(and also the Completeness Theorem) is known be equivalent to (BPIT).
\\ \\
Example 3.6 \cite[pp. 288-289]{Everett-Whaples}: Let $V_1$ be the set of non-negative integers and let $V_2$ be the 
set of positive integers.  For all \emph{positive} integers $x \in V_1$, 
we decree that $x$ is adjacent to the corresponding positive integer in $V_2$ 
and to no other elements of $V_2$.  However, we decree that $0 \in V_1$ is 
adjacent to \emph{every} element of $Y$.  It is clear that there is no semiperfect matching, because if we match $0$ to any $n \in V_2$, then the 
corresponding element $n \in V_1$ cannot be matched.  But the Cockroach 
Condition holds: for a finite subset $X \subset V_1$, if $0 \neq X$ then 
$\# N(V_1) = \# V_1$, whereas if $0 \in X$ then $N(V_1) = V_2$.

\subsection{The First Existence Theorem}

\begin{thm}
\label{EXISTENCETHM}
Consider the following conditions on a graph $G = (V,E)$: \\
(D) $\Der G \neq \varnothing$.  \\
(H) For all subsets $X \subset V$, $\# X \leq \# N(X)$.  \\
(H$'$) For all \emph{finite} subsets $X \subset V$, $\# X \leq \# N(X)$.  \\ 
a) Then (D) $\implies$ (H) $\implies$ (H$'$).  \\
b)  If $G$ is locally finite, then (H$'$) $\implies$ (D) and thus (D) $\iff$ (H) $\iff$ (H$'$). 
\end{thm}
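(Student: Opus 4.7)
The proof splits cleanly into the easy direction (part a) and the substantive direction (part b), with the latter being a direct reduction to Halls' Theorem in its Marriage Form.

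For part (a), the implication (D) $\implies$ (H) is the ``cockroach pigeonhole'' observation: if $f \in \Der G$, then for any $X \subset V$ the restriction $f|_X$ is an injection into $N(X)$ (since $f(v) \sim v$ means $f(v) \in N_v \subset N(X)$), giving $\# X = \# f(X) \leq \# N(X)$. The implication (H) $\implies$ (H$'$) is immediate since finite subsets are subsets.

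For part (b), the plan is to construct the \emph{bipartite double} of $G$ and feed it into Theorem \ref{HMTII}. Explicitly, take two disjoint copies $V_1, V_2$ of $V$ and form the bipartitioned graph $\tilde{G} = (V_1, V_2, \tilde{E})$ where $\{v_1, w_2\} \in \tilde{E}$ iff $v \sim_G w$. Since $G$ is locally finite, every vertex of $V_1$ has finite degree in $\tilde{G}$, so Theorem \ref{HMTII} is applicable. The neighborhood in $\tilde{G}$ of a finite subset $X \subset V_1$ corresponds, under the identification $V_2 = V$, to the neighborhood $N_G(X)$ in $G$, so condition (H$'$) is exactly the Cockroach Condition for $\tilde{G}$. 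Theorem \ref{HMTII} then produces a semiperfect matching $\iota: V_1 \to V_2$, and reinterpreting $\iota$ as a map $f: V \to V$ yields an injection with $f(v) \sim_G v$ for all $v \in V$.

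A small point to verify in closing: we need $f$ to be fixed-point free, not merely a graph permutation. This is automatic because $G$ is simple (no loops), so $v \not\sim_G v$, and hence $f(v) \sim_G v$ forces $f(v) \neq v$. Thus $f \in \Der G$, establishing (H$'$) $\implies$ (D). Combined with part (a), this gives the full equivalence under local finiteness. I do not anticipate a real obstacle here: the crux is simply noticing that ``derangement of a graph'' is exactly ``semiperfect matching in the bipartite double,'' after which Halls' Theorem does all the work.
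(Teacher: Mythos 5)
Your proof is correct and is essentially the paper's argument: both reduce (H$'$) $\implies$ (D) to Halls' Theorem, the only cosmetic difference being that you invoke the Marriage Form (Theorem \ref{HMTII}) via the bipartite double, whereas the paper feeds the set system $\{N_x\}_{x \in V}$ directly into the Transversal Form (Theorem \ref{HMT}) --- and the paper's own Remark 3.1 shows these two routes are mechanically interchangeable (your bipartite double is exactly the graph constructed there). Your closing observation that fixed-point-freeness comes for free from the absence of loops is the same point the paper makes implicitly by concluding $\sigma(y) \sim y$ directly.
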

\begin{proof}
a) (D) $\implies$ (H): If $\sigma \in \Der G$ and $X \subset V$, then $\sigma: V \ra V$ is an injection with $\sigma(X) \subset N(X)$.  Thus $\# X \leq \# N(X)$.
(H) $\implies$ (H$'$) is immediate.  \\
b) (H$'$) $\implies$ (D) if $G$ is locally finite: for each $x \in V$, let 
$S_x = N_x$, and let $I = \{S_x \}_{x \in V}$.  Since $G$ is locally finite, $I$ is 
an indexed family of finite subsets of $V$.  By assumption, for any finite subfamily $J \subset I$, 
\[ \# J \leq \# N(J) = \# \bigcup_{x \in J} 
N_x = \# \bigcup_{i \in J} S_i:\]
this is the Hall Condition.  Thus by Theorem \ref{HMT}, there is $X \subset V$ and a bijection $f: X \ra V$ such that for all $x \in X$, $x \in N_{f(x)}$, i.e., $x \sim f(x)$.  Let $\sigma = f^{-1}: V \ra X$.  Then for all $y \in V$, $\sigma(y) \sim 
f(\sigma(y)) = y$, so $\sigma \in \Der G$.
\end{proof}
\noindent
Remark 3.7: Example 3.3 shows (H) need not imply (D) without the assumption of local finiteness.  The graph with vertex set $\R$ and such that every $x \in \R$ is adjacent to every integer $n > x$ satisfies (H$'$) but not (H).

\begin{lemma}
\label{ET3}
Let $G$ be a locally finite graph which violates the cockroach condition: there is a finite subset $X \subset V(G)$ such that $\# X > \# N(X)$.  Then there 
is an \emph{independent} subset $Y \subset X$ such that $\# Y > \# N(Y)$.
\end{lemma}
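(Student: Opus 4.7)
My plan is to exhibit the required independent subset $Y$ explicitly, rather than by induction or by an iterative shrinking of $X$. I would set
\[ Y := X \setminus N(X), \]
the set of vertices of $X$ with no $G$-neighbor inside $X$, and show directly that $Y$ is both independent and violates the cockroach condition.

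For independence, I would observe that if $y_1,y_2 \in Y$ satisfied $y_1 \sim y_2$, then since $y_2 \in X$ we would have $y_1 \in N(y_2) \subseteq N(X)$, contradicting $y_1 \in X \setminus N(X)$. For the deficiency, every $y \in Y$ has $N(y) \cap X = \varnothing$ by construction, so $N(Y) \cap X = \varnothing$; and $Y \subseteq X$ gives $N(Y) \subseteq N(X)$. Combining, $N(Y) \subseteq N(X) \setminus X$, so
\[ \# N(Y) \leq \# N(X) - \#(X \cap N(X)). \]
Meanwhile $\# Y = \# X - \#(X \cap N(X))$. Subtracting yields
\[ \# Y - \# N(Y) \geq \# X - \# N(X) > 0, \]
which is the required strict inequality. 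This bound also forces $\# Y \geq 1$, so $Y$ is automatically nonempty (equivalently, $X \subseteq N(X)$ is incompatible with $\# X > \# N(X)$ for finite $X$).

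The only real subtlety here is hitting on the correct construction. A naive attempt to delete an endpoint of an internal edge of $X$ and iterate does not work, because removing a single vertex can drop the neighborhood by at most one element and so only preserves the weak inequality $\# Y \geq \# N(Y)$. Passing instead to $X \setminus N(X)$ succeeds because the decrease from $\# X$ to $\# Y$ and the decrease from $\# N(X)$ to (an upper bound for) $\# N(Y)$ are governed by the \emph{same} correction term $\#(X \cap N(X))$, so the original deficiency $\# X - \# N(X)$ survives intact.
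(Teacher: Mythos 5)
Your proof is correct and takes essentially the same route as the paper: the paper also sets $Y$ equal to the set of vertices of $X$ not adjacent to any element of $X$ (that is, $X \setminus N(X)$) and performs the same count, phrased via the inclusion $X \setminus Y \subset N(X) \setminus N(Y)$ rather than your equivalent $N(Y) \subseteq N(X) \setminus X$. No changes needed.
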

\begin{proof}
Let $Y \subset X$ be the subset of all vertices which are not adjacent to 
any element of $X$, so $Y$ is an independent set.  Put $m_1 = \# Y$, $m_2 = \# (X \setminus Y)$, and $n = m_1 + m_2 = \# X$.  By hypothesis, $\# N(X) < n = 
m_1 + m_2$; since $X \setminus Y \subset N(X) \setminus N(Y)$, we find 
$\# N(Y) < m_1 + m_2 - m_2 < m_1 = \# Y$. 
\end{proof}
\noindent
Combining Proposition \ref{ET2}, Theorem \ref{EXISTENCETHM} and 
Lemma \ref{ET3} we deduce the following result.
\begin{thm}(First Existence Theorem) 
\label{1ET}
\label{MAINEXISTENCETHM} \textbf{} \\
For a locally finite graph, TFAE: \\
(i) For every finite independent set $X$ in $G$, $\# X \leq \# N(X)$.  \\
(ii) $G$ admits a surjective graph derangement.
\end{thm}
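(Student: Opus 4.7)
The plan is to obtain Theorem \ref{1ET} as a direct corollary of the three results it is billed as combining: Proposition \ref{ET2}, Theorem \ref{EXISTENCETHM}, and Lemma \ref{ET3}. The proof will be a short logical assembly rather than any new argument.

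First I would dispatch (ii) $\implies$ (i). Suppose $G$ admits a surjective graph derangement $\sigma$. In particular $\Der G \neq \varnothing$, so condition (D) of Theorem \ref{EXISTENCETHM} holds, and by part (a) of that theorem so does (H$'$): $\#X \leq \#N(X)$ for every finite $X \subset V$. Restricting attention to independent sets gives (i). This direction does not even use the independence hypothesis, local finiteness, or the surjectivity of $\sigma$.

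The substantive direction is (i) $\implies$ (ii). Here I would argue by contrapositive to upgrade (i) to the full finite-set cockroach condition (H$'$). If (H$'$) fails, then there is a finite subset $X \subset V$ with $\#X > \#N(X)$; Lemma \ref{ET3} then produces an \emph{independent} subset $Y \subset X$ with $\#Y > \#N(Y)$, contradicting (i). Therefore (i) implies (H$'$). Since $G$ is locally finite, Theorem \ref{EXISTENCETHM}(b) gives $\Der G \neq \varnothing$, and Proposition \ref{ET2} upgrades any graph derangement to a surjective one, yielding (ii).

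I do not anticipate a genuine obstacle: the only place care is needed is in verifying that Lemma \ref{ET3} really lets us weaken ``all finite subsets'' to ``all finite \emph{independent} subsets'' in the cockroach condition, and the lemma is stated in exactly the form required. All other steps are just invocations of previously established results.
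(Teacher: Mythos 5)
Your proof is correct and is essentially identical to the paper's: the paper simply states that Theorem \ref{1ET} follows by ``combining Proposition \ref{ET2}, Theorem \ref{EXISTENCETHM} and Lemma \ref{ET3},'' and your assembly (Lemma \ref{ET3} to pass from independent sets to all finite sets, Theorem \ref{EXISTENCETHM}(b) to get a derangement, Proposition \ref{ET2} to make it surjective, and Theorem \ref{EXISTENCETHM}(a) for the converse) is exactly the intended argument, spelled out in more detail than the paper bothers to give.
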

\noindent
Remark 3.8: Theorem \ref{MAINEXISTENCETHM} was first proved by W.T. Tutte \cite[7.1]{Tutte53}.  Most of Tutte's paper is concerned with related -- but 
deeper -- results on digraphs.  The result which is our Theorem \ref{1ET} 
appears at the end of the paper and is proven by passage to an auxiliary 
digraph and reduction to previous results.  Perhaps because this was not 
the main focus of \cite{Tutte53}, Theorem \ref{1ET} seems not to be well known.  In particular, our observation that one need only apply Theorem \ref{HMT} appears to be new!

%
%
%
%

\subsection{Bipartite Existence Theorems}
\textbf{} \\ \\ \noindent
A \textbf{matching} on a graph $V = (G,E)$ is a subset 
$\mathcal{M} \subset E$ such that no two edges in $\mathcal{M}$ share a 
common vertex.  A matching $\mathcal{M}$ is \textbf{perfect} if every 
vertex of $G$ is incident to exactly one edge in $\mathcal{M}$.
\\ \\
A graph permutation is \textbf{dyadic} if 
all of its cycles have length at most $2$. 

\begin{prop}
\label{DYADICPROP}
\label{2.6}
Let $G$ be a graph.  \\
a) Matchings of $G$ correspond bijectively to dyadic graph permutations.  \\
b)  Under this bijection perfect matchings correspond to dyadic graph derangements.
\end{prop}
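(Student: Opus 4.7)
The plan is to exhibit explicit maps in both directions and verify that they are mutually inverse. Given a matching $\mathcal{M} \subset E$, I would define $\sigma_\mathcal{M}: V \to V$ by $\sigma_\mathcal{M}(v) = w$ when $\{v,w\} \in \mathcal{M}$ and $\sigma_\mathcal{M}(v) = v$ otherwise. The matching condition (no two edges share a vertex) makes this well-defined and forces $\sigma_\mathcal{M}$ to be an involution, hence injective. By construction, for every $v$ we have $\sigma_\mathcal{M}(v) = v$ or $\sigma_\mathcal{M}(v) \sim v$, so $\sigma_\mathcal{M} \in \Perm G$. Its cycles are precisely the fixed points (vertices unsaturated by $\mathcal{M}$) together with the $2$-cycles $\{v,w\}$ for $\{v,w\} \in \mathcal{M}$, so $\sigma_\mathcal{M}$ is dyadic.

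For the reverse direction, given a dyadic $\sigma \in \Perm G$, set $\mathcal{M}_\sigma = \{\{v,\sigma(v)\} : \sigma(v) \neq v\}$. Each such pair is an edge of $G$ because $\sigma(v) \sim v$ whenever $\sigma(v) \neq v$. The main (albeit mild) point that needs care is showing $\mathcal{M}_\sigma$ is actually a matching: here I would invoke the dyadic hypothesis to observe that if $\sigma(v) \neq v$, then the cycle through $v$ has length exactly $2$, so $\sigma(\sigma(v)) = v$ and $\sigma^{-1}(v) = \sigma(v)$. Therefore the only edges in $\mathcal{M}_\sigma$ incident to $v$ are $\{v,\sigma(v)\}$ and $\{\sigma^{-1}(v),v\}$, which coincide. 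This is the crux of the argument.

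Once both maps are in hand, the identities $\mathcal{M}_{\sigma_\mathcal{M}} = \mathcal{M}$ and $\sigma_{\mathcal{M}_\sigma} = \sigma$ follow immediately by unwinding definitions, establishing the bijection of part (a). For part (b), note that the fixed-point set of $\sigma_\mathcal{M}$ is exactly the set of vertices unsaturated by $\mathcal{M}$. Hence $\sigma_\mathcal{M}$ is fixed-point-free, i.e., a graph derangement, if and only if every vertex is incident to some (necessarily unique) edge of $\mathcal{M}$, which is exactly the condition that $\mathcal{M}$ is a perfect matching.
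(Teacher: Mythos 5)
Your proposal is correct and follows essentially the same route as the paper: construct $\sigma_{\mathcal{M}}$ from a matching and $\mathcal{M}_\sigma$ from a dyadic permutation, check each is well-defined, observe they are mutually inverse, and identify fixed points of $\sigma_{\mathcal{M}}$ with unsaturated vertices for part (b). Your extra remark that the dyadic hypothesis forces $\sigma(\sigma(v)) = v$ on non-fixed points is a welcome bit of care (it is what rules out longer or singly infinite cycles and makes $\mathcal{M}_\sigma$ a genuine matching), but it is the same argument the paper leaves implicit.
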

\begin{proof}
a) Let $\mathcal{M} \subset E$ be a matching.  We define $\sigma_{\mathcal{M}} \in \Sym V$ 
as follows: if $x$ incident to the edge $e = \{x, y\}$, we put $\sigma x = y$.  
Otherwise we put $\sigma x = x$.  This is well-defined since by definition 
every vertex is incident to at most one edge and gives rise to a dyadic 
graph permutation.  Conversely, to any dyadic graph permutation $\sigma \in 
\Sym V$, let $X \subset V$ be the subset of vertices which are not fixed by 
$\sigma$ and put $\mathcal{M}_{\sigma} = \bigcup_{x \in X} \{x, \sigma x\}$.  
Then $\mathcal{M} \mapsto \sigma_{\mathcal{M}}$ 
and $\mathcal{M} \mapsto \mathcal{M}_{\sigma}$ are mutually inverse.  \\
b) A matching $\mathcal{M}$ is perfect iff every vertex $x \in V$ is incident 
to an edge of $\mathcal{M}$ iff the permutation $\sigma_{\mathcal{M}}$ is 
fixed-point free.
\end{proof}
\noindent
Let $G = (V_1,V_2,E)$ be a bipartitioned graph.  A \textbf{semiperfect matching} on $G$ is a matching $\mathcal{M} \subset E$ such that every vertex of 
$V_1$ is incident to exactly one element of $\mathcal{M}$.  Thus a subset $\mathcal{M} \subset G$ is a perfect 
matching on $(V_1 \cup V_2,E)$ iff it is a semiperfect matching on both 
$(V_1,V_2,E)$ and on $(V_2,V_1,E)$.  \\ \indent
A \textbf{semiderangement} of $G = (V_1,V_2,E)$ is an injective function  $\iota: V_1 \ra V_2$ such that $x \sim \iota(x)$ for all $x \in V_1$.  \\ \indent
But in fact we have defined the same thing twice: a semiderangement of a 
bipartitioned graph is nothing else than a semiperfect matching.
 
\begin{thm}(Semiderangement Existence Theorem)
\label{BSDT}
\label{SEMIEXISTENCETHM}
Consider the following conditions on a bipartitioned graph $G = (V_1,V_2,E)$: \\
(SD) There is an injection $\iota: V_1 \ra V_2$ such that for all $x \in V_1$, 
$x \sim \iota(x)$.  \\
(H) For every finite subset $J \subset V_1$, $\# J \leq \# N(J)$. \\ 
Then (SD) $\implies$ (H), and if $G$ is locally finite, (H) $\implies$ (SD).
\end{thm}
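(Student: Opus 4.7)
The plan is to observe that this theorem is essentially a restatement, in the language of semiderangements, of Theorem \ref{HMTII} (the Marriage Form of Halls' Theorem), together with the elementary observation made just before the statement that a semiderangement \emph{is} a semiperfect matching. So both directions should require very little additional work.

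For the direction (SD) $\implies$ (H), I would argue directly: suppose $\iota: V_1 \ra V_2$ is an injection with $x \sim \iota(x)$ for every $x \in V_1$. Then for any $J \subset V_1$, the restriction $\iota|_J$ is an injection whose image lies in $N(J)$, since $\iota(x) \in N_x \subset N(J)$ for each $x \in J$. Hence $\# J = \# \iota(J) \leq \# N(J)$. I would note that no finiteness hypothesis on $G$ or on $J$ is required here, so (SD) in fact yields the stronger condition that $\# X \leq \# N(X)$ for \emph{every} $X \subset V_1$.

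For the direction (H) $\implies$ (SD) under local finiteness, I would simply invoke Theorem \ref{HMTII}. Local finiteness of $G$ implies in particular that every vertex of $V_1$ has finite degree, which is the hypothesis of that theorem; the condition (H) is verbatim the Cockroach Condition stated there; and the conclusion produces a semiperfect matching $\iota: V_1 \ra V_2$, which by the preceding discussion is exactly a semiderangement.

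The main obstacle here is essentially nonexistent, because the real work was done in proving Theorem \ref{HMTII} (the induction on $V_1$ in the finite case and the Tychonoff/ultrafilter compactness argument in the infinite case). The present theorem is a matter of recognizing that the vocabulary has merely been relabeled: ``Cockroach Condition'' becomes (H), and ``semiperfect matching'' becomes ``semiderangement.''
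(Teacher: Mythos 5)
Your proposal is correct and matches the paper's own proof, which consists of the single line that the theorem ``is precisely Theorem \ref{HMTII} stated in the language of semiderangements.'' Your explicit verification of (SD) $\implies$ (H) is a harmless (indeed welcome) elaboration of the easy direction that the paper leaves implicit.
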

\begin{proof} This is precisely Theorem \ref{HMTII} stated in the language of semiderangements.
\end{proof}

\begin{thm}(K\H onigs' Theorem)
\label{SYMTHM}
Let $G = (V_1,V_2,E)$ be a bipartitioned graph, which \emph{need not} be locally 
finite.  Suppose there is a semiderangement $\iota_1: V_1 \ra V_2$ of $(V_1,V_2,E)$ and a semiderangement $\iota_2: V_2 \ra V_1$ of $(V_2,V_1,E)$.  
Then $G$ admits a perfect matching, i.e., a bijection $f: V_1 \ra V_2$ such that $x \sim f(x)$ for all $x \in V_1$.
\end{thm}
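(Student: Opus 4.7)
The plan is to recognize this statement as essentially the Cantor--Schr\"oder--Bernstein theorem equipped with the additional data that both injections are edge-preserving, and then to show that the standard CSB construction automatically outputs an edge-preserving bijection when fed edge-preserving injections. No new ideas are required; in particular no form of the Axiom of Choice enters here, in contrast to Theorem \ref{HMTII}.

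Concretely, let $C_0 = V_1 \setminus \iota_2(V_2)$ be the set of $V_1$-vertices outside the image of $\iota_2$, and recursively define $C_{n+1} = \iota_2(\iota_1(C_n))$. Put $C = \bigcup_{n \geq 0} C_n \subseteq V_1$. For $x \in V_1 \setminus C$ we have in particular $x \notin C_0$, so $x = \iota_2(y)$ for a unique $y \in V_2$, and I may write $\iota_2^{-1}(x)$ for that $y$. Define
\[ f(x) = \begin{cases} \iota_1(x), & x \in C, \\ \iota_2^{-1}(x), & x \in V_1 \setminus C. \end{cases} \]
The verification that $f: V_1 \to V_2$ is a bijection is the standard CSB bookkeeping: injectivity comes from splitting the codomain into $\iota_1(C)$ and $V_2 \setminus \iota_1(C)$ and invoking injectivity of $\iota_1$ and $\iota_2$ separately, while surjectivity amounts to observing that any $z \in V_2 \setminus \iota_1(C)$ satisfies $\iota_2(z) \in V_1 \setminus C$ by construction of $C$, hence $z = f(\iota_2(z))$.

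The adjacency check is the only piece of content beyond CSB, and it is immediate from the semiderangement hypothesis: if $x \in C$ then $f(x) = \iota_1(x) \sim x$ because $\iota_1$ is a semiderangement, and if $x \in V_1 \setminus C$ and $y = \iota_2^{-1}(x)$, then $x = \iota_2(y) \sim y = f(x)$ because $\iota_2$ is a semiderangement. In either case the edge $\{x, f(x)\}$ is literally one of the edges already present in the ``graph'' of $\iota_1$ or $\iota_2$, and therefore lies in $E$.

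If there is an obstacle at all, it is only psychological: one might expect to have to decompose the union $\iota_1 \cup \iota_2$ into its connected components (which, since each vertex has degree at most $2$ in this union, are even cycles together with finite, singly infinite, and doubly infinite paths), verify that no finite path has both endpoints on the same side, and then choose a perfect matching in each component. The formula for $f$ above carries out this choice canonically and in one stroke, and so bypasses the component decomposition --- though the latter is a useful sanity check on the construction.
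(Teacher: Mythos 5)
Your proof is correct, but it runs the Cantor--Bernstein argument in a genuinely different way from the paper. The paper forms the single injection $\iota = \iota_1 \coprod \iota_2 : V_1 \coprod V_2 \ra V_1 \coprod V_2$ and invokes the orbit trichotomy of $\S 2.2$ (finite, singly infinite, and doubly infinite cycles): on every orbit except a singly infinite one with initial vertex in $V_2$, the restriction of $\iota_1$ is already a bijection onto the $V_2$-part of that orbit, and on the exceptional orbits one shifts by one step, i.e., replaces $\iota_1$ by $\iota_2^{-1}$. This is K\H onig's chain proof of Cantor--Bernstein --- which is exactly the point of the paper's Remark 3.9 --- and it hands you the cycle structure of the resulting perfect matching for free. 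You instead use the Dedekind-style iterated-preimage construction $C = \bigcup_n C_n$, which packages the same dichotomy ($f = \iota_1$ versus $f = \iota_2^{-1}$) into a closed formula and avoids any orbit analysis; your $C$ is precisely the union of the $V_1$-parts of the singly infinite chains whose initial vertex lies in $V_1$, so the two constructions do not even produce the same bijection (you use $\iota_2^{-1}$ on finite and doubly infinite orbits where the paper keeps $\iota_1$), but both are valid and both are visibly choice-free. Your key observation --- that each branch of the bijection is adjacency-preserving because the edge $\{x, f(x)\}$ is already an edge of the graph of $\iota_1$ or of $\iota_2$ --- coincides with the paper's and is all the graph theory the statement requires. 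One small correction to your closing aside: the components of the union $\iota_1 \cup \iota_2$ can never be \emph{finite} paths, since $\iota$ is everywhere defined and injective, so every non-cycle component is a singly or doubly infinite path; this does not affect your argument, which never uses that decomposition.
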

\begin{proof} Let $V = V_1 \coprod V_2$.  Then $\iota = \iota_1 \coprod \iota_2: V \ra V$ is a graph derangement.  The injection $\iota$ partitions $V$ into  finite cycles, doubly infinite cycles, and singly infinite cycles.  For each 
cycle $C$ which is finite or doubly infinite, $\iota_1: C \cap V_1 \ra C \cap V_2$ and $\iota_2: C \cap V_2 \ra C \cap V_1$ are bijections.  
Thus if there are no singly infinite cycles, taking $f = \iota_1$ we are done.  If $C$ is a singly infinite cycle, it has an initial vertex $x_1$.  If $x_1 \in V_1$, then $\iota_1: C \cap V_1 \ra C \cap V_2$ 
is surjective; the problem occurs if $x_1 \in V_2$: then $x_1$ does not 
lie in the image of $\iota_1$.  We have $x_2 = \iota_2(x_1) \in V_1$, 
$x_3 = \iota_1(x_2) \in V_2$, and so forth.  So we can repair matters 
by defining $\iota_1$ on $x_2,x_4,\ldots$ by $x_2 \mapsto x_1$, $x_4 \mapsto x_3$, and so forth.  Doing this on every singly infinite cycle with initial 
vertex lying in $V_2$ a bijection $f: V_1 \ra V_2$.  Moreover, since $\iota_2$ is a semiderangement and $\iota(x_{2n-1}) = x_{2n}$ for all $n \in \Z^+$, 
we have $f(x_{2n}) = x_{2n-1} \sim x_{2n}$, so $f$ is a bijection.  
\end{proof}
\noindent
Remark 3.9: Suppose $V_1$ and $V_2$ are sets and $\iota_1: V_1 \ra V_2$ 
and $\iota_2: V_2 \ra V_1$ are injections between them.  If we apply Theorem 
\ref{SYMTHM} to the bipartitioned graph on $(V_1,V_2)$ in which $x \in V_1 \sim y \in V_2 \iff \iota_1(x) = y$ or $\iota_2(y) = x$, we get a 
bijection $f: V_1 \ra V_2$: this is the celebrated \textbf{Cantor-Bernstein Theorem}.  As a proof of Cantor-Bernstein, this argument was given by Gyula (``Julius'') K\H onig \cite{Konig06} and remains to this day one of the standard 
proofs.  His son D\'enes K\H onig explicitly made the connection to matching in infinite graphs in his seminal text \cite{Konig50}.

\begin{thm}(Second Existence Theorem)
\label{HALLISHTHM}
\label{2.8}
\label{2ET}
Let $G = (V_1,V_2,E)$ be a locally finite bipartitioned graph.  TFAE: \\
(i) $G$ admits a perfect matching. \\
(ii) $G$ admits a dyadic graph derangement.  \\
(iii) $G$ admits a graph derangement.  \\
(iv) For every subset $J \subset V$, $\# J \leq \# N(J)$.  
\end{thm}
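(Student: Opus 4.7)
The plan is to establish the four equivalences via the cycle $(i) \Rightarrow (ii) \Rightarrow (iii) \Rightarrow (iv) \Rightarrow (i)$, using the prior results we have in hand.

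For $(i) \Leftrightarrow (ii)$, I would simply invoke Proposition \ref{DYADICPROP}: a perfect matching on $G$ corresponds bijectively to a dyadic graph derangement. (The bipartiteness of $G$ plays no role here, but is consistent with the fact that every dyadic derangement is composed entirely of $2$-cycles, and bipartite graphs can only have even-length cycles.) The implication $(ii) \Rightarrow (iii)$ is immediate from definitions, since a dyadic graph derangement is in particular a graph derangement. The implication $(iii) \Rightarrow (iv)$ is exactly Theorem \ref{EXISTENCETHM}(a): if $\sigma \in \Der G$, then for any $J \subset V$ we have $\sigma(J) \subset N(J)$ and $\sigma|_J$ is an injection, giving $\#J \leq \# N(J)$.

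The substantive step is $(iv) \Rightarrow (i)$, and this is where I would use the two tools designed for the bipartite setting. Condition (iv) applied to finite subsets $J \subset V_1$ gives precisely the Cockroach Condition of Theorem \ref{SEMIEXISTENCETHM} for the bipartitioned graph $(V_1, V_2, E)$; since $G$ is locally finite, that theorem produces a semiderangement $\iota_1 : V_1 \to V_2$. By symmetry, applying condition (iv) to finite subsets $J \subset V_2$ and invoking Theorem \ref{SEMIEXISTENCETHM} in the other direction yields a semiderangement $\iota_2 : V_2 \to V_1$. Now Theorem \ref{SYMTHM} (K\"onigs' Theorem) takes these two semiderangements as input and produces a perfect matching $f : V_1 \to V_2$, which is exactly condition (i).

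The only point that might trip one up is verifying that condition (iv) really does supply the Hall hypothesis on \emph{each} side separately, which is clear because $N(J) \subset V_2$ when $J \subset V_1$ and vice versa, so no cross-contamination between the sides occurs. The work has really been done in Theorems \ref{SEMIEXISTENCETHM} and \ref{SYMTHM}; the main theorem is merely the symmetric packaging of these tools together with the matching/dyadic-derangement dictionary of Proposition \ref{DYADICPROP}. No additional analytic or combinatorial input is required beyond what has already been set up.
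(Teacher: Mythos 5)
Your proposal is correct and follows exactly the paper's own route: (i)$\iff$(ii) via the matching/dyadic-derangement dictionary, (ii)$\implies$(iii) trivially, (iii)$\implies$(iv) by the standard injection-into-neighborhood argument, and (iv)$\implies$(i) by applying the Semiderangement Existence Theorem on each side and then feeding the two semiderangements into K\H onigs' Theorem. No differences worth noting.
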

\begin{proof}
(i) $\iff$ (ii) by Proposition \ref{DYADICPROP}.  \\
(ii) $\implies$ (iii) is immediate.  \\
(iii) $\implies$ (iv) is the same easy argument we have already seen.  \\ 
(iv) $\implies$ (i): By Theorem \ref{BSDT}, we have semiderangements 
$\iota_1: V_1 \ra V_2$ and $\iota_2: V_2 \ra V_1$.  By Theorem \ref{SYMTHM} this 
gives a perfect matching.   
%
\end{proof}
\noindent
Remark 3.10: The equivalence (i) $\iff$ (iv) above is due to R. Rado \cite{Rado49}.

\subsection{An Equivalence}
\textbf{} \\ \\ \noindent
Theorems \ref{1ET} and \ref{2ET} are ``equivalent'' in the sense that they were proved using equivalent formulations of Halls' Theorem (together with, in the case of Theorem \ref{2ET}, a Cantor-Bernstein argument).  In this section we 
will show their equivalence in a stronger sense: each can readily and rapidly 
be deduced from the other.
\\
\indent
Assume Theorem \ref{1ET}, and let $G = (V_1,V_2,E)$ be a locally finite bipartitioned graph satisfying the Cockroach Condition: for all finite independent subsets $J \subset V_1 \coprod V_2$, $\# N J \geq \# J$.  Then Theorem \ref{1ET} applies to yield a 
surjective graph derangement $f$.  A cycle $C$ admits a dyadic graph derangement iff it is not finite of odd length; since $G$ is bipartite, no cycle in $f$ 
is finite of odd length.  Thus decomposing $f$ cycle by cycle yields a dyadic 
graph derangement.  \\ 
\indent
Assume Theorem \ref{2ET}, and let $G = (V,E)$ be a locally finite graph satisfying the Hall Condition: for all finite independent subsets $J \subset V$, $\# NJ \geq \# J$.  Let $G_2 = (V_1,V_2,E_2)$ be the \textbf{bipartite double} of $G$: we put $V_1 = V_2 = V$.  For $x \in V$, let $x_1$ (resp. $x_2$) denote the copy of $x$ in $V_1$ (resp. $V_2$).  For every $e = \{x,y\} \in E$, we give ourselves 
edges $\{x_1,y_2\}, \{y_1,x_2\} \in E_2$.  Then $G_2$ is locally finite bipartitioned, and it is easy to see that the Hall Condition in $G$ implies 
the Cockroach Condition in $G_2$.  By Theorem \ref{2ET}, $G_2$ admits a dyadic 
graph derangement $f_2$.  From $f_2$ we construct a graph derangement $f$ of $G$: for $x \in V$, let $x_1$ be the corresponding element of $V_1$; let 
$y_2 = f_2(x_1)$, and let $y$ be the element of $V$ corresponding to $y_2$.  
Then we put $f(x) = y$.  It is immediate to see that $f$ is a graph derangement 
of $G$.  It need not be surjective, but no problem if it isn't: apply Proposition \ref{ET2}.
\\ \\
Remark 3.11: Our deduction of Theorem \ref{1ET} from Theorem \ref{2ET} is inspired by an unpublished manuscript of L. Levine \cite{Levine01}.
\\ \\
Remark 3.12: Recall that Theorem \ref{2ET} implies the 
Cantor-Bernstein Theorem.  The above equivalence thus has the following curious consequence: Cantor-Bernstein follows almost immediately from the 
transversal form of Halls' Theorem!

\subsection{Dyadic Existence Theorems}
\textbf{} \\ \\ \noindent
One may ask if there is also an Existence Theorem for dyadic derangements in 
non-bipartite graphs.  The answer is yes, a quite celebrated theorem of Tutte.  
A later result of Berge gives information on dyadic graph permutations.
\\ \\
Let $G = (V,E)$ be a graph.  For a subset $X \subset V$, we denote by 
$G \setminus X$ the induced subgraph with vertex set $V \setminus X$. 

\begin{thm}(Third Existence Theorem)
\label{2.9}
For a finite graph $G = (V,E)$, TFAE: \\
(i) $G$ has a dyadic graph derangement. \\
(ii) For every subset $X \subset V$, the number of connected components 
of $G \setminus X$ with an odd number of vertices is at most $\# X$.
\end{thm}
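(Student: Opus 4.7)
The plan is to handle (i) $\implies$ (ii) directly and to prove Tutte's direction (ii) $\implies$ (i) by induction on $|V|$.

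For (i) $\implies$ (ii): if $\mathcal{M}$ is a perfect matching (equivalently, a dyadic graph derangement by Proposition \ref{DYADICPROP}) and $X \subseteq V$, then each odd component $C$ of $G \setminus X$ has odd order, so $\mathcal{M}$ cannot pair its vertices entirely among themselves; at least one $v \in V(C)$ must have $\mathcal{M}(v) \in X$, and distinct odd components contribute distinct such partners, yielding the inequality.

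For (ii) $\implies$ (i), I induct on $|V|$. The case $V = \emptyset$ is trivial, and setting $X = \emptyset$ in (ii) forces $|V|$ to be even. Write $q(H)$ for the number of odd components of a graph $H$, and call $X \subseteq V$ a \emph{barrier} if $q(G \setminus X) = |X|$. A parity count gives $q(G \setminus X) \equiv |X| \pmod 2$, which applied to singletons shows $q(G \setminus \{v\}) = 1$ for every $v \in V$; so for $V \neq \emptyset$, non-empty barriers always exist. Fix a maximal such barrier $X$, and let $C_1, \ldots, C_k$ (with $k = |X|$) and $D_1, \ldots, D_\ell$ be the odd and even components of $G \setminus X$.

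The key subclaim is that each $C_j$ is \emph{factor-critical}: $C_j \setminus \{v\}$ has a perfect matching for every $v \in V(C_j)$. Otherwise induction applied to the smaller graph $C_j \setminus \{v\}$ produces $Y \subseteq V(C_j) \setminus \{v\}$ with $q((C_j \setminus \{v\}) \setminus Y) \geq |Y| + 2$; then $X' = X \cup \{v\} \cup Y$ satisfies $q(G \setminus X') \geq (|Y|+2) + (k-1) = |X'|$, making $X'$ a barrier strictly containing $X$ and contradicting maximality. Next, form an auxiliary bipartite graph $B$ on $(X, \{C_1, \ldots, C_k\})$ with $x \sim C_j$ iff $x$ has a neighbor in $V(C_j)$. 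Hall's condition holds in $B$: if $T \subseteq \{C_j\}$ had $|N_B(T)| < |T|$, the set $N_B(T) \subseteq X$ would leave each $C_j$ for $j \in T$ intact as an odd component of $G \setminus N_B(T)$, violating (ii). Applying Theorem \ref{HMTII} yields a bijection realized by edges $\{x_j, v_j\}$ with $v_j \in V(C_j)$. Combining these edges with perfect matchings of each $C_j \setminus \{v_j\}$ (from the subclaim) and perfect matchings of each $D_m$ (which satisfies (ii) by an analogous calculation using $X \cup Y$ for $Y \subseteq V(D_m)$, and is smaller than $G$, so induction applies) produces a perfect matching of $G$. The main obstacle is the factor-criticality subclaim, where the maximality of $X$ is essential: without it, expanding $X$ to $X \cup \{v\} \cup Y$ would fail to produce a contradiction with (ii).
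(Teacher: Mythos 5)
Your argument is correct, but it is worth noting that the paper does not actually prove Theorem \ref{2.9} at all: the stated ``proof'' is only a citation of Tutte's 1947 paper, whose original argument runs through Pfaffians of skew-symmetric matrices rather than anything resembling your route. What you have written out is the now-standard elementary proof via a maximal barrier (a set $X$ with $q(G\setminus X)=|X|$), the factor-criticality of the odd components, and a reduction of the final assembly step to Hall's theorem on the auxiliary bipartite graph between $X$ and the odd components. All the steps check: the parity observation $q(G\setminus X)\equiv |X| \pmod 2$ (using $|V|$ even) correctly guarantees that singletons are barriers, the count $q(G\setminus X')\ge (|Y|+2)+(k-1)=|X'|$ really does contradict maximality of $X$ in the factor-criticality subclaim, the Hall condition on the auxiliary graph follows from (ii) applied to $N_B(T)$, and the even components inherit condition (ii) and are proper subgraphs because $X\ne\varnothing$. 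Your approach buys two things the paper's citation does not: a self-contained induction, and --- pleasingly, given the paper's stated theme --- one more instance of a matching theorem being reduced to Halls' Theorem (your invocation of Theorem \ref{HMTII}). The only cosmetic point: you conflate ``perfect matching'' with ``dyadic graph derangement'' at the outset, which is legitimate by Proposition \ref{DYADICPROP}, but you should say explicitly that you are proving the perfect-matching form and transporting it across that bijection.
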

\begin{proof} See \cite{Tutte}.
\end{proof}
\noindent
Remark 3.13: Theorem \ref{2.9} can be generalized to locally finite graphs: 
see \cite{Tutte50}.
\\ \\
A \textbf{maximum matching} of a finite graph $G$ 
is a matching $\mathcal{M}$ such that $\# \mathcal{M}$ is maximized among 
all matchings of $G$.  Thus if $G$ admits a perfect matching, a matching $\mathcal{M}$ is a maximum matching iff it is perfect, whereas in general 
the size of a maximum matching measures the deviation from a perfect 
matching in $G$.  
\\ \indent
For a finite graph $H$, let $\operatorname{odd}(H)$ be the number of connected components of $H$ with an odd number of vertices.
\begin{thm}(Berge's Theorem \cite{Berge})
\label{2.10}
Let $G$ be any finite graph.  The size of a maximum matching in $G$ is 
\[ B_G = \frac{1}{2} \left( \min_{X \subset V} \# X - \operatorname{odd}(G \setminus X) + \# V \right). \]
\end{thm}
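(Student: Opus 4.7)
The plan is to prove the two inequalities $B_G \geq |\mathcal{M}|$ for every matching $\mathcal{M}$ and $B_G \leq |\mathcal{M}_0|$ for some matching $\mathcal{M}_0$ separately, using Theorem \ref{2.9} (Tutte's Third Existence Theorem) for the harder direction.

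For the upper bound, I would fix a matching $\mathcal{M}$ and a subset $X \subset V$, and argue that
\[ 2\#\mathcal{M} \leq \#V + \#X - \operatorname{odd}(G \setminus X). \]
Let $C_1,\ldots,C_k$ be the odd components of $G \setminus X$, where $k = \operatorname{odd}(G \setminus X)$. Since each $C_i$ has odd cardinality, $\mathcal{M}$ cannot match all of $C_i$ within $C_i$, so each $C_i$ contains at least one vertex that is either $\mathcal{M}$-unmatched or $\mathcal{M}$-matched to some vertex of $X$. At most $\#X$ components can be of the second sort (the vertices in $X$ account for at most $\#X$ such ``exported'' matches), so at least $k - \#X$ vertices must be unmatched. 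Hence $\#V - 2\#\mathcal{M} \geq \operatorname{odd}(G \setminus X) - \#X$, which rearranges to the displayed inequality. Minimizing over $X$ gives $\#\mathcal{M} \leq B_G$.

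For the lower bound, let $d = \max_{X \subset V}\bigl(\operatorname{odd}(G \setminus X) - \#X\bigr)$, so $B_G = \tfrac{1}{2}(\#V - d)$. I would check two useful facts: $d \geq 0$ (take $X = \varnothing$) and $d \equiv \#V \pmod{2}$ (since $\operatorname{odd}(G \setminus X) \equiv \#(V\setminus X) = \#V - \#X \pmod 2$). Now augment $G$ to a graph $G'$ by adding $d$ new vertices $w_1,\ldots,w_d$, each joined to every vertex of $V$ and to each other. Then $\#V(G') = \#V + d$ is even. I would now verify Tutte's condition for $G'$: given $X' \subset V(G')$, if $X'$ omits some $w_i$, then $w_i$ is a universal vertex of $G' \setminus X'$, so $G' \setminus X'$ is connected and $\operatorname{odd}(G' \setminus X') \leq 1 \leq \#X'$ (with the sole exception $X'=\varnothing$, where $\#V(G')$ is even gives $\operatorname{odd} = 0$); if on the other hand $X' \supset \{w_1,\ldots,w_d\}$, write $X' = X \cup \{w_1,\ldots,w_d\}$ with $X \subset V$, so $G'\setminus X' = G\setminus X$ and $\operatorname{odd}(G'\setminus X') = \operatorname{odd}(G\setminus X) \leq \#X + d = \#X'$ by the definition of $d$. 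By Theorem \ref{2.9}, $G'$ has a perfect matching $\mathcal{M}'$. Deleting $w_1,\ldots,w_d$ destroys at most $d$ edges of $\mathcal{M}'$, leaving a matching of $G$ of size at least $\tfrac{\#V+d}{2} - d = \tfrac{\#V - d}{2} = B_G$.

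The main obstacle is the verification that the augmented graph $G'$ satisfies Tutte's condition; this is where the parity observation $d \equiv \#V \pmod 2$ is essential, since without it $\#V(G')$ might be odd and the case $X' = \varnothing$ would fail. Everything else is bookkeeping: the upper bound is a direct counting argument, and the reduction via universal vertices turns the ``deficient'' graph $G$ into one to which the already-established Tutte criterion applies.
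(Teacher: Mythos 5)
Your proposal is correct. Note that the paper does not actually prove Theorem \ref{2.10} at all: it is stated with only a citation to Berge, so there is no internal argument to compare yours against. What you have written is the standard (and, as far as I can tell, complete) deduction of the Berge--Tutte formula from Theorem \ref{2.9}, and it fits the paper's framework well since Theorem \ref{2.9} is already quoted there. Both halves check out: the upper bound is the correct counting of deficient odd components (each of the $\operatorname{odd}(G\setminus X)$ odd components either houses an unmatched vertex or exports a matching edge into $X$, and $X$ can absorb at most $\# X$ such exports, the components being disjoint); the lower bound via the join with $d$ universal vertices is sound, and you correctly isolate the two points where the argument could silently fail, namely $d \geq 0$ and $d \equiv \# V \pmod 2$, the latter being exactly what makes $\operatorname{odd}(G'\setminus\varnothing)=0$ and hence saves the Tutte condition at $X'=\varnothing$. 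One presentational remark: to invoke Theorem \ref{2.9} as stated in the paper you should pass through Proposition \ref{DYADICPROP}, since the theorem is phrased in terms of dyadic graph derangements rather than perfect matchings; and you should note (as your construction implicitly does) that an edge of the perfect matching of $G'$ with both endpoints in $V$ is automatically an edge of $G$, because the augmentation only adds edges incident to the new vertices. What your argument buys is a self-contained proof of a result the paper leaves as a black box, at the cost only of the already-assumed Theorem \ref{2.9}.
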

\noindent
We immediately deduce the following result on dyadic 
graph permutations.

\begin{cor}
Let $G$ be a finite graph.  Then the least number of fixed points in a 
dyadic graph permutation of $G$ is $\# V - 2 B_G$.
\end{cor}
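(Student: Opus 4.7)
The plan is to invoke Proposition \ref{DYADICPROP} to translate the problem from dyadic graph permutations to matchings, and then apply Berge's Theorem. First I would fix a dyadic graph permutation $\sigma$ of $G$ and pass to the corresponding matching $\mathcal{M}_\sigma \subset E$ via the bijection of Proposition \ref{DYADICPROP}. The key observation is that a vertex $x \in V$ is a fixed point of $\sigma$ if and only if $x$ is \emph{not} incident to any edge of $\mathcal{M}_\sigma$. Since the edges of a matching are pairwise vertex-disjoint, the set of non-fixed vertices of $\sigma$ has cardinality exactly $2 \# \mathcal{M}_\sigma$, and consequently the number of fixed points of $\sigma$ equals $\# V - 2 \# \mathcal{M}_\sigma$.

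Next I would run this correspondence in the other direction: any matching $\mathcal{M}$ gives rise to a dyadic graph permutation $\sigma_{\mathcal{M}}$ whose fixed-point count is again $\# V - 2 \# \mathcal{M}$. Since $\mathcal{M} \mapsto \sigma_{\mathcal{M}}$ and $\sigma \mapsto \mathcal{M}_{\sigma}$ are mutually inverse, minimizing the number of fixed points over all dyadic graph permutations is the same as maximizing $\# \mathcal{M}$ over all matchings of $G$. By definition of $B_G$, the maximum value of $\# \mathcal{M}$ is $B_G$, so the minimum number of fixed points is
\[ \# V - 2 B_G, \]
as claimed. Berge's Theorem (Theorem \ref{2.10}) then supplies the explicit formula for $B_G$ if one wants it, but logically the corollary only needs that $B_G$ is the size of a maximum matching, which is built into its definition.

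There is essentially no obstacle: the whole content sits in Proposition \ref{DYADICPROP} and the elementary observation that a matching of size $k$ covers exactly $2k$ vertices. The only point worth checking carefully is that the bijection between matchings and dyadic graph permutations is indeed fixed-point-count preserving in the precise sense above, i.e., that the ``$X \subset V$ not fixed by $\sigma$'' used in the proof of Proposition \ref{DYADICPROP} has cardinality exactly twice the associated matching.
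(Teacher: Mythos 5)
Your argument is correct and is precisely the deduction the paper intends: the paper gives no written proof (it says the corollary is ``immediately deduced'' from Berge's Theorem), and the intended route is exactly your translation via Proposition \ref{DYADICPROP} together with the observation that a matching of size $k$ covers $2k$ vertices. The only nitpick is your closing remark that the identification of $B_G$ with the maximum matching size is ``built into its definition'': as stated, $B_G$ is defined by the min-formula, so Berge's Theorem is genuinely needed to equate it with $\max_{\mathcal{M}} \# \mathcal{M}$ --- but since you invoke the theorem anyway, this does not affect the proof.
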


\subsection{Matchless Graph Derangements}
\textbf{} \\ \\ \noindent
Let $G = (V,E)$ be a finite graph with $\# V = n$.  Then a graph permutation 
of cycle type $(n)$ is called a \textbf{Hamiltonian cycle} (or Hamiltonian 
circuit).  
\\ \\
From the perspective of graph derangements it is clear that Hamiltonian 
cycles lie at the other extreme from dyadic derangements and permutations.  Here much less is known than in the dyadic case: there is no known 
Hamiltonian analogue of Tutte's 
Theorem on perfect matchings, and in place of Berge's Theorem we have the 
following open question.
\begin{ques}
\label{2.11}
Given a finite graph $G$, determine the largest integer $n$ such that 
$G$ admits a graph permutation of type $(n,1,\ldots,1)$.  Equivalently, 
determine the maximum order of a vertex subgraph of $G$ admitting a 
Hamiltonian cycle.
\end{ques}
\noindent
A general graph derangement is close to being a partition of the vertex 
set into Hamiltonian cycles, \emph{except} that if $x$ and $y$ are adjacent vertices, 
then sending $x$ to $y$ and $y$ to $x$ meets the requirements of a graph 
derangement but does not give a Hamiltonian subcycle because the edge from 
$x$ to $y$ is being used twice.  Let us say a graph derangement is \textbf{matchless} if each cycle has length at least $3$.  The above Existence Theorems lead us naturally to the following question.
\begin{ques}
\label{MATCHLESSQUES}
Is there an Existence Theorem for matchless graph derangements?
\end{ques}
\noindent
As noted above, there is no known Existence Theorem for Hamiltonian cycles.  Since a Hamiltonian cycle is a graph derangement of a highly restricted kind, 
one might hope that Question \ref{MATCHLESSQUES} is somewhat more accessible.

\section{Checkerboards Revisited}

\subsection{Universal and even universal graphs}
\textbf{} \\ \\ \noindent
Let $G$ be a graph on the vertex set $[n] = \{1,\ldots,n\}$ such that 
$\Der G \neq \varnothing$.  As in $\S 2.2$ 
it is natural to inquire about the possible cycle types of graph derangements 
(and also graph permutations) of $G$.  We say $G$ is \textbf{universal} if for 
every partition $\mathfrak{p}$ of $n$ there is a graph derangement of $G$ with 
cycle type $n$.  For instance, the complete graph $K_n$ is (rather tautologously) universal.
\\ \\
If $n \geq 5$ and $G$ is bipartite, by Corollary \ref{2.5}, $G$ is \emph{not} universal, because the only possible cycle types are even.  Thus for 
graphs known to be bipartite the more interesting condition is that every 
possible even partition of $[n]$ occurs as the cycle type of a graph derangement 
of $G$: we call such graphs \textbf{even universal}.


\subsection{On the Even Universality of Checkerboard Graphs}

\begin{prop}
\label{4.1}
For all $n \in \Z^+$, the checkerboard graph $R_{2,n}$ is even 
universal.
\end{prop}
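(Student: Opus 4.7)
The plan is to exploit the fact that $R_{2,n}$ decomposes naturally into vertical strips. Since $R_{2,n}$ is bipartite on $2n$ vertices (color $(i,j)$ by $i+j \bmod 2$), Corollary \ref{2.5} tells us that every graph derangement has all even cycles, so an ``even partition'' of $2n$ means a partition $(2a_1,\dots,2a_k)$ with each $a_i\geq 1$ and $\sum_i a_i = n$. My aim is to realize each such partition explicitly.

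First I would record the building block: for every $a \geq 2$, the checkerboard graph $R_{2,a}$ carries a Hamiltonian cycle, obtained by traversing its boundary
\[ (1,1)\to(1,2)\to\cdots\to(1,a)\to(2,a)\to(2,a-1)\to\cdots\to(2,1)\to(1,1). \]
This is a cycle of length $2a$ that uses only edges of $R_{2,a}$, and as a graph derangement it has cycle type $(2a)$. For $a=1$, the graph $R_{2,1}$ has two adjacent vertices and the swap gives a derangement of cycle type $(2)$.

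Next, given an even partition $(2a_1,\dots,2a_k)$ of $2n$, I would split the column index set $\{1,\dots,n\}$ into consecutive blocks $B_1,\dots,B_k$ of sizes $a_1,\dots,a_k$. The induced subgraph of $R_{2,n}$ on $\{1,2\}\times B_j$ is isomorphic to $R_{2,a_j}$. On each such sub-strip apply the building block above to get a graph derangement $\sigma_j$ of cycle type $(2a_j)$, and then define $\sigma\colon V(R_{2,n})\to V(R_{2,n})$ to be $\sigma_j$ on $\{1,2\}\times B_j$. Because each $\sigma_j$ uses only edges inside its own sub-strip (which are edges of $R_{2,n}$), and because the sub-strips partition the vertex set, $\sigma$ is an injection with $\sigma(v)\sim v$ for all $v$, i.e., a graph derangement of $R_{2,n}$, and its cycle type is exactly $(2a_1,\dots,2a_k)$.

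There is no real obstacle here beyond verifying the boundary cycle claim, which is direct from the definition of $R_{2,a}$. The argument is entirely constructive and uses none of the existence machinery from Section 3; the point is simply that $R_{2,n}$ admits enough freedom to be cut into rectangular blocks of arbitrary widths, each of which has a Hamiltonian cycle.
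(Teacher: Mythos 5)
Your proof is correct and complete. The paper itself omits the argument (``an easy inductive argument which we leave to the reader''), and your explicit construction --- cutting the $2\times n$ strip into consecutive column blocks of widths $a_1,\dots,a_k$ and placing a boundary Hamiltonian cycle (or a single swap when $a_j=1$) on each block --- is exactly the natural way to carry out that induction, just written out directly rather than by peeling off one part at a time.
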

\begin{proof} This is an easy inductive argument which we leave to the reader.
\end{proof}

\begin{prop}
\label{4.2}
Let $n \geq 4$ be an even number.  Then: \\
a) If $a_1,\ldots,a_k$ are even numbers greater than $2$ such that 
$4 + \sum_{i=1}^k a_i = 3n$, then there is no graph derangement of 
$R_{3,n}$ of cycle type $(a_1,\ldots,a_k,4)$.  \\
b) It follows that $R_{3,n}$ is not even universal.
\end{prop}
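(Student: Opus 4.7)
The plan is to prove part (a) by contradiction. Suppose $\sigma$ is a graph derangement of $R_{3,n}$ with cycle type $(a_1, \ldots, a_k, 4)$ and each $a_i \geq 4$. Since every cycle of $\sigma$ has length at least $3$, the edge set $F = \{\{v, \sigma(v)\} : v \in V(R_{3,n})\}$ is a $2$-factor of $R_{3,n}$. The distinguished $4$-cycle of $\sigma$ realizes a $4$-cycle of $R_{3,n}$, and since every $4$-cycle of a grid graph is a unit square, its vertex set is
\[ B = \{(i,j),\, (i,j+1),\, (i+1,j),\, (i+1,j+1)\} \]
for some $i \in \{1,2\}$ and $j \in \{1,\ldots,n-1\}$. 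The four vertices of $B$ already have their full $F$-degree via the four edges of this square, so no $F$-edge joins $B$ to $V(R_{3,n}) \setminus B$. Write $r$ for the unique element of $\{1,2,3\} \setminus \{i,i+1\}$.

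I would split on $j$. If $j = 1$ (and, by the left-right reflection $(s,t) \mapsto (s, n+1-t)$, similarly if $j = n-1$), the vertex $(r, 1) \notin B$ has $R_{3,n}$-neighbors $(2,1) \in B$ and $(r,2) \notin B$, so at most one $F$-edge is incident to $(r,1)$, contradicting the $F$-degree condition.

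For the main case $2 \leq j \leq n-2$, I plan a parity argument on the ``left block'' $L = \{(s,t) : 1 \leq t \leq j-1\}$, which is disjoint from $B$ and has $|L| = 3(j-1)$. Let $a$ (respectively $b$) be the number of $F$-edges with both (respectively exactly one) endpoints in $L$; summing the $F$-degrees over $L$ gives $6(j-1) = 2a + b$, so $b$ is even. The edges of $R_{3,n}$ joining $L$ to its complement are exactly $(s,j-1)-(s,j)$ for $s \in \{1,2,3\}$, and for $s \in \{i,i+1\}$ the endpoint $(s,j)$ lies in $B$, so that edge cannot lie in $F$; hence $b \leq 1$. Combining, $b = 0$, and in particular the edge $(r,j-1)-(r,j)$ is not in $F$. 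But the vertex $(r,j) \notin B$ has $R_{3,n}$-neighbors only $(r,j-1)$, $(r,j+1)$, and $(2,j)$; since $(2,j) \in B$ and $(r,j-1)-(r,j) \notin F$, at most one $F$-edge is incident to $(r,j)$, again a contradiction.

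Part (b) then follows from (a) by taking $k = 1$ and $a_1 = 3n - 4$: since $n \geq 4$ is even, $3n - 4 \geq 8$ is also even, and (a) rules out cycle type $(3n-4, 4)$, exhibiting an even partition of $3n$ into parts $\geq 4$ that is not realized as a cycle type and thereby showing $R_{3,n}$ is not even universal. The main obstacle in carrying out the plan is the parity step: one must carefully verify that after $B$ is excluded, only a single edge of $R_{3,n}$ remains as a possible $F$-crossing between $L$ and its complement.
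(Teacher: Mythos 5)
Your proof is correct, and it reaches the conclusion by a genuinely different mechanism than the paper in the crucial step. Both arguments begin the same way: a $4$-cycle in a checkerboard graph must be a unit square, which in $R_{3,n}$ occupies two of the three rows and two consecutive columns $j,j+1$, so the vertex in the remaining row $r$ adjacent to the square is starved of neighbors. The paper finishes by asserting that the two vertices of row $r$ in columns $j$ and $j+1$ ``cannot be part of any Hamiltonian cycle in the complement of the square,'' which is implicitly a separation argument: if both were in cycles of length $\geq 3$, the forced horizontal path through them would have to close up by crossing columns $j,j+1$ again, which the square forbids; hence a $2$-cycle is forced. You instead pass to the $2$-factor $F$ determined by $\sigma$ (legitimate precisely because all cycles have length $>2$) and run a parity count on the edge cut between the block of columns $1,\dots,j-1$ and its complement: the degree sum forces an even number of crossing $F$-edges, the square kills two of the three candidate crossing edges, so none cross, and the vertex $(r,j)$ is left with at most one available $F$-edge --- a direct degree contradiction. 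Your route is more self-contained and rigorous where the paper is sketchiest (it never actually proves the non-Hamiltonicity claim), at the modest cost of a case split on $j$ and the boundary reflections. For part (b) you use the witness $(3n-4,4)$ where the paper uses $(4,\dots,4)$ or $(6,4,\dots,4)$; both are valid excluded even partitions, and yours avoids the divisibility-by-$4$ case distinction.
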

\begin{proof}
a)  A $4$-cycle on any checkerboard graph must be a $(2 \times 2)$-square.  By 
symmetry we may assume that the $(2 \times 2)$-square is placed so as to occupy portions of the top two rows of $R_{m,n}$.  The two vertices immediately underneath the square cannot be part of any Hamiltonian cycle in the complement of the square, so any graph derangement containing a $4$-cycle must also contain a $2$-cycle directly underneath the $4$-cycle.  Thus the cycle type $(a_1,\ldots,a_k,4)$ is excluded.  \\
b) Since $n \geq 4$ is even, $3n$ is even and at least $12$, so
there are even partitions of $3n$ of the above form: $(4,4,\ldots,4)$ 
if $n$ is divisible by $4$; $(6,4,4,\ldots,4)$ otherwise.
\end{proof}

\begin{prop}
\label{4.3}
If $m$ is odd and $n$ is divisible by $4$, then $R_{m,n}$ admits 
no graph derangement of cycle type $(4,4,\ldots,4)$ and is therefore 
not even universal.
\end{prop}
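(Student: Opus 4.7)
The plan is to reduce the problem to the impossibility of partitioning the vertex set of $R_{m,n}$ into $2 \times 2$ squares when $m$ is odd, and then argue as in Proposition \ref{4.2}. Observe that a graph derangement of cycle type $(4,4,\ldots,4)$ decomposes the vertex set into $4$-cycles; by the structural remark already used in Proposition \ref{4.2}, any such $4$-cycle in a checkerboard graph is the vertex set of a $2 \times 2$ square block. So I need to show that when $m$ is odd, the $mn$ vertices of $R_{m,n}$ cannot be partitioned into $mn/4$ such $2 \times 2$ blocks.

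To prove this, I would perform a row-by-row vertex count. For $1 \leq i \leq m-1$, let $b_i$ denote the number of blocks in a hypothetical partition that occupy rows $i$ and $i+1$. Each block contributes exactly two vertices to each of its two rows, so comparing against the fact that each row has exactly $n$ vertices yields:
\[
2b_1 = n, \quad 2b_{m-1} = n, \quad 2b_{i-1}+2b_i = n \text{ for } 1 < i < m.
\]
The recurrence $b_i = n/2 - b_{i-1}$ with initial condition $b_1 = n/2$ forces $b_i = n/2$ for odd $i$ and $b_i = 0$ for even $i$. Hence the boundary condition $b_{m-1} = n/2$ forces $m-1$ to be odd, i.e., $m$ to be even. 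Since $m$ is assumed odd, this is a contradiction, so no such partition (and hence no such graph derangement) exists.

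For the ``therefore not even universal'' clause, I would simply note that because $4 \mid n$ we have $4 \mid mn$, so $(4,4,\ldots,4)$ (with $mn/4$ parts) is an even partition of the number of vertices, and $R_{m,n}$ is bipartite by the usual checkerboard $2$-coloring, so this partition is a candidate cycle type forbidden by Corollary \ref{2.5} only when it fails to be even — which it does not. The failure to realize it therefore witnesses non-even-universality.

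The main (and essentially only) obstacle is verifying the structural claim that all $4$-cycles in $R_{m,n}$ are $2 \times 2$ squares, but this is already invoked in Proposition \ref{4.2} and follows from the fact that in the grid $R_{m,n}$ any two vertices at graph-distance $2$ have at most two common neighbors, and the only way to close a $4$-cycle is via a unit square. The rest of the argument is the elementary row-count above.
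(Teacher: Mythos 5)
Your proof is correct. The paper leaves this proposition to the reader, so there is no official argument to compare against; your reduction to the impossibility of tiling an odd-by-$n$ grid with $2 \times 2$ blocks, via the row-count recurrence $b_1 = n/2$, $b_i = n/2 - b_{i-1}$, $b_{m-1} = n/2$, is a clean and complete way to fill that gap, and the structural fact that every $4$-cycle in $R_{m,n}$ is a $2 \times 2$ square is exactly the one the paper already uses in Proposition \ref{4.2}. The only cosmetic points: the degenerate case $m=1$ falls outside your indexing but is trivially covered (there are no $2 \times 2$ blocks at all), and your appeal to Corollary \ref{2.5} in the last paragraph is unnecessary --- all you need is that $4 \mid mn$ makes $(4,4,\ldots,4)$ an even partition of $\# V$, so its non-realization directly witnesses the failure of even universality.
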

\begin{proof} Left to the reader.
\end{proof}
\noindent
Example 4.1 ($G_{3,4}$): There are $p(\frac{12}{2}) = 11$ even partitions of 
$12$.  By Proposition \ref{4.3}, we cannot realize the cycle types $(8,4)$, 
$(4,4,4)$ by graph derangements.  We can realize the other nine:
\[ (12), (10,2), (8,2,2), (6,6), (6,4,2), (6,2,2,2), (4,4,2,2), (4,2,2,2,2), 
(2,2,2,2,2,2). \]
\\ 
Example 4.2 ($G_{4,4}$): Of the $p(\frac{16}{2}) = 22$ even partitions of $16$, 
we can realize $20$:
\[ (16), (14,2), (12,4), (12,2,2), (10,4,2), (10,2,2,2), (8,8), (8,6,2), (8,4,4),  (8,4,2,2), \] \[ (8,2,2,2), (6,6,2,2), (6,4,4,2), (6,4,2,2,2), (6,2,2,2,2,2), (4,4,4,4), (4,4,4,2,2), \] \[ (4,4,2,2,2,2), (4,2,2,2,2,2,2), (2,2,2,2,2,2,2,2,2). \]
We cannot realize: 
\[ (10,6), (6,6,4). \]
Indeed, the only order $6$ cycle of a checkerboard 
graph is the $2 \times 3$ checkerboard subgraph.  Removing any $6$-cycle leaves one of the corner vertices pendant and 
hence not part of any cycle of order greater than $2$.  
\\ \\
Example 4.3 ($G_{3,6}$): There are $p(\frac{18}{2}) = 30$ even partitions 
of $18$.  We can realize these $23$ of them: 
\[ (18), (16,2), (14,2,2), (12,6), (12,4,2), (12,2,2,2), (10,6,2), (10,4,2,2), 
\] \[
(10,2,2,2,2), (8,8,2), (8,6,2,2), (8,4,2,2,2), (8,2,2,2,2,2,2), (6,6,6), (6,6,4,2),  \] \[ (6,6,2,2,2), (6,4,4,2,2), (6,4,2,2,2,2), (6,2,2,2,2,2,2), (4,4,4,2,2,2), \] \[ (4,4,2,2,2,2,2), (4,2,2,2,2,2,2,2), 
(2,2,2,2,2,2,2,2,2). \]
We cannot realize the following ones:
\[ (14,4), (10,8), (10,4,4), (8,6,4), (8,4,4,2), (6,4,4,4), (4,4,4,4,2). \]
Of these, all but $(10,8)$ and $(4,4,4,4,2)$ are excluded by Proposition \ref{4.2}, and we leave it to the reader to check that these two ``exceptional'' cases cannot occur.  
\\ \\
Example 4.5 ($G_{4,5}$): There are $p(\frac{20}{2}) = 42$ even partitions 
of $20$.  We can realize $39$ of them.  We cannot realize: 
\[ (8,8,4), (8,4,4,4), (4,4,4,4,4)                       . \]
No $(8,8,4)$: to place a $4$-cycle in $G_{4,5}$ without leaving a pendant vertex, we must place it in a corner, without loss of generality the upper left corner.  The only $8$-cycle which can fit into the remaining lower left corner 
is the recantagular one, and this leaves a pendant vertex at the lower right corner.  \\
No $(8,4,4,4)$: Any placement of three $4$-cycles in $G_{4,5}$ gives two of them in either the top half or the bottom half, and without loss of generality the top half.  Any such placement leaves a pendant vertex in the top row.
\\
No $(4,4,4,4,4)$: This follows from Proposition \ref{4.3}.  Alternately, 
the argument of the previous paragraph works here as well.  
\\ \\
Example 4.6 ($G_{4,6}$): There are $p(\frac{24}{12}) = 77$ even partitions of 
$24$.  They can \emph{all} be realized by graph derangements: $G_{4,6}$ is 
even universal.  
\\ \\
By analyzing the above examples, we found the following additional families of excluded cycle types.  The proofs are left to the reader.

\begin{prop}
Let $n = 2k+1$ be an odd integer greater than $1$.  Then $G_{4,n}$ admits 
no matchless graph derangement with $2k-1$ or more $4$-cycles.
\end{prop}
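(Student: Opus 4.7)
\bigskip

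The plan is to suppose $\sigma$ is a matchless derangement of $G_{4,n}$ with at least $n-2 = 2k-1$ four-cycles and derive a contradiction. Recall that every $4$-cycle of $\sigma$ must be the perimeter of a $2 \times 2$ subsquare (as noted in the discussion of Proposition \ref{4.2}), and matchlessness combined with bipartiteness forces every cycle of $\sigma$ to have even length at least $4$. Counting vertices: with at least $n-2$ four-cycles we use at least $4n-8$ of the $4n$ vertices, leaving at most $8$ for other cycles. The cases are (a) $n$ four-cycles tiling the grid, (b) $n-1$ four-cycles plus a fourth cycle on the remaining four vertices (which reduces to (a)), or (c) exactly $n-2$ four-cycles plus one $8$-cycle.

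For case (a), I would let $a, b, c$ count the $2 \times 2$ squares using row pairs $\{1,2\}, \{2,3\}, \{3,4\}$ respectively; then row $1$ is covered iff $2a = n$, which is impossible since $n$ is odd.

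The heart of the argument is case (c), handled by a row-parity count. With $a + b + c = n - 2$ as above, the number $r_i$ of $8$-cycle vertices in row $i$ is
\[
r_1 = n - 2a, \quad r_2 = n - 2(a+b), \quad r_3 = n - 2(b+c), \quad r_4 = n - 2c.
\]
Since $n$ is odd, each $r_i$ is an odd non-negative integer, hence $r_i \geq 1$. Their sum is $4n - 4(n-2) = 8$, and the only partitions of $8$ into four positive odd summands are $5 + 1 + 1 + 1$ and $3 + 3 + 1 + 1$. So the $8$-cycle must have all four row-counts odd.

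To finish, I would show no $8$-cycle in $G_{4,n}$ realizes such row-counts. Every simple $8$-cycle in a planar grid graph bounds a simply connected polyomino of perimeter $8$, and a brief enumeration (via the identity $p = 4c - 2e$ relating perimeter $p$, cell count $c$, and internal edges $e$) shows that these polyominoes are exactly the two straight trominoes, the four $L$-trominoes, and the $2 \times 2$ square tetromino. Computing their row-count signatures within a four-row strip yields $(4,4,0,0)$, $(2,2,2,2)$, $(3,3,2,0)$, and $(3,2,3,0)$ respectively (up to permutation of rows); each has an even entry, contradicting the previous paragraph. The main obstacle is this final enumeration step — it is routine but requires care to be sure nothing has been missed. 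Once the list is in hand, the row-parity invariant, together with the oddness of $n$, cleanly forces the contradiction.
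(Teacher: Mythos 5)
Your argument is correct, and since the paper explicitly leaves the proof of this proposition to the reader, there is nothing to compare it against; your write-up in effect supplies the missing proof. The case analysis is exhaustive ($n-1$ four-cycles is impossible because the leftover four vertices would have to form a fourth... rather, an additional $4$-cycle, and $n$ four-cycles fails because $2a=n$ is odd), and the row-parity count in case (c) is the right invariant: each $r_i = n - 2(\cdot)$ is odd, hence positive, while every $8$-cycle in a grid bounds a perimeter-$8$ polyomino (straight tromino, $L$-tromino, or $2\times 2$ tetromino) and each of these has an even row-count in every placement. The enumeration you flag as the main obstacle does close up: $8 = 4c - 2e$ forces $c \leq 4$, and $c \geq 5$ gives perimeter at least $10$, so the list is complete. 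One cosmetic slip: the signature $(3,2,3,0)$ belongs to the $2\times 2$ square tetromino and the $L$-trominoes give only $(3,3,2,0)$ or $(2,3,3,0)$ (the omitted lattice point is always a corner of the $3\times 3$ block, never in its middle row), but since every listed signature contains an even entry this does not affect the contradiction. Note also that your invariant already yields the stronger statement that no $8$-cycle can coexist with exactly $n-2$ four-cycles regardless of what the remaining cycles are; you only need matchlessness to pin down that the leftover $8$ vertices form a single $8$-cycle rather than, say, a $6$-cycle plus a $2$-cycle.
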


\begin{prop}
\label{COOLJOSHPROP}
For any non-negative integer $k$, $G_{4,6k+4}$ admits no graph derangement 
of cycle type $(6,6,\ldots,6,4)$.  Thus these graphs are \emph{not} 
even universal.
\end{prop}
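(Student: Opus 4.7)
The plan is to reduce the question to a rectangle-tiling problem and finish with a linear row-count. Write $n = 6k+4$, and recall from Proposition \ref{4.2} and Example 4.2 that every $4$-cycle in a checkerboard graph is a $2 \times 2$ square and every $6$-cycle is the perimeter of a $2 \times 3$ (or $3 \times 2$) rectangle. Consequently, a graph derangement of $G_{4,n}$ of cycle type $(6,\ldots,6,4)$ is exactly the data of a partition of the $4 \times n$ rectangle into one $2 \times 2$ tile together with $4k+2$ tiles each of shape $2 \times 3$ or $3 \times 2$. So it suffices to show no such tiling exists.

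Let $h_{12}, h_{23}, h_{34}$ count the horizontal $2 \times 3$ tiles occupying row-pairs $(1,2), (2,3), (3,4)$, and let $v_{123}, v_{234}$ count the vertical $3 \times 2$ tiles occupying row-triples $(1,2,3), (2,3,4)$. Counting cells row-by-row gives four linear equations whose right-hand sides are all $n = 6k+4$: the $2 \times 2$ contributes $2$ to each of the two rows it occupies, every horizontal tile contributes $3$ to each of its two rows, and every vertical tile contributes $2$ to each of its three rows. I would then split on the row-position of the unique $2 \times 2$ tile, which is one of $(1,2), (2,3), (3,4)$.

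If the $2 \times 2$ occupies rows $(1,2)$, subtracting the row-$4$ equation from the row-$3$ equation forces $3 h_{23} + 2 v_{123} = 0$, so $h_{23} = v_{123} = 0$; the row-$1$ equation then reads $3 h_{12} = 6k+2$, which has no solution modulo $3$. The case of rows $(3,4)$ is symmetric. If the $2 \times 2$ occupies rows $(2,3)$, subtracting the row-$1$ equation from the row-$2$ equation yields $2 + 3 h_{23} + 2 v_{234} = 0$, impossible since the left-hand side is strictly positive. All three positions lead to contradictions, so no tiling of the required form exists.

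The only non-bookkeeping step is the geometric identification of $4$- and $6$-cycles with rectangular tiles, and that is already in hand from the earlier examples; the row-counting contradictions are then essentially forced, with the central-case ``positive $= 0$'' being what makes the argument so clean. The final ``not even universal'' conclusion is immediate, since $(6^{4k+2}, 4)$ is an even partition of $\# V(G_{4,6k+4}) = 24k+16$ which we have just shown is not realized.
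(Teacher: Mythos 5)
The paper offers no proof of Proposition \ref{COOLJOSHPROP} (it is explicitly left to the reader), so there is nothing to compare against; your argument stands on its own, and it is correct and complete. The reduction to a tiling problem is legitimate: a $4$-cycle has exactly the $4$ vertices of a $2 \times 2$ square and a $6$-cycle exactly the $6$ vertices of a $2 \times 3$ block (facts the paper already asserts in the proof of Proposition \ref{4.2} and in Example 4.2), so the cycles of a derangement of type $(6,\ldots,6,4)$ partition the $4 \times (6k+4)$ board into one $2 \times 2$ tile and $4k+2$ tiles of size $2 \times 3$. Your row-count equations are set up correctly, the subtraction in the outer cases forces $h_{23} = v_{123} = 0$ (resp.\ its mirror) and then $3h_{12} = 6k+2$, which fails modulo $3$, and the central case gives $3h_{23} + 2v_{234} + 2 = 0$, which is impossible outright. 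The only point worth making explicit in a final write-up is the one-line classification of $4$- and $6$-cycles in grid graphs (a simple cycle of length $2a+2b$ with $a$ horizontal and $b$ vertical steps in each direction needs $a,b \geq 1$, leaving only the $2 \times 2$ and $2 \times 3$ bounding boxes), since the paper states but does not prove it.
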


\begin{prop}
For any odd integer $m$ and $k \in \Z^+$, $G_{m,4k+2}$ admits no graph 
derangement of cycle type $(6,4,\ldots,4)$.  Thus these graphs are 
\emph{not} even universal.
\end{prop}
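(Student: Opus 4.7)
The plan is a parity/coloring argument. First I would invoke the small-cycle classification in the checkerboard graph: the only $4$-cycles of $R_{m,n}$ are the perimeter cycles of $2 \times 2$ vertex blocks, and (as already used in Example 4.2) the only $6$-cycles are the perimeter cycles of $2 \times 3$ vertex blocks. Consequently a graph derangement of $R_{m,4k+2}$ of cycle type $(6,4,\ldots,4)$ amounts to a partition of the vertex set $\{1,\ldots,m\} \times \{1,\ldots,4k+2\}$ into exactly one $2 \times 3$ block together with some number of $2 \times 2$ blocks.

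I would then color each vertex $(i,j)$ by the parity of its first coordinate $i$, and let $D$ denote the number of vertices with $i$ odd minus the number of vertices with $i$ even. Since $m$ is odd there are $(m+1)/2$ odd-$i$ rows and $(m-1)/2$ even-$i$ rows, each of length $4k+2$, so
\[ D = \tfrac{m+1}{2}(4k+2) - \tfrac{m-1}{2}(4k+2) = 4k+2. \]

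Next I would tally the contribution of each block to $D$. Every $2 \times 2$ block meets two consecutive rows (one of each parity) in two vertices each, contributing $0$. A $2 \times 3$ block with its long side parallel to the row direction likewise meets only two consecutive rows (three vertices each), contributing $0$. Only a $2 \times 3$ block with its long side perpendicular to the rows meets three consecutive rows (two vertices each); a direct check shows such a block contributes $+2$ or $-2$ to $D$ according to the parity of its top-row index.

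Since the hypothetical derangement contains exactly one $6$-cycle, the total contribution to $D$ has absolute value at most $2$. But $D = 4k+2 \geq 6$ for $k \geq 1$, a contradiction. The only nontrivial step is the classification of the short cycles of $R_{m,n}$; once that is in hand the parity bookkeeping is immediate, so I do not anticipate any real obstacle.
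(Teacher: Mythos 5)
Your argument is correct and complete. The paper leaves the proof of this proposition to the reader, so there is no official argument to compare against; but the two facts you need at the outset are exactly the ones the paper itself asserts and uses elsewhere (the $4$-cycle classification in the proof of Proposition \ref{4.2}, the $6$-cycle classification in Example 4.2), so a derangement of type $(6,4,\ldots,4)$ really is the same thing as a partition of the board into one $2\times 3$ block and a collection of $2\times 2$ blocks. Your row-parity invariant then does the rest: with $m$ odd one has $D = 4k+2 \geq 6$, every $2\times 2$ block and every ``horizontal'' $2\times 3$ block contributes $0$, and the unique $6$-cycle contributes at most $2$ in absolute value, a contradiction. (For the final clause of the proposition you should also note, in one line, that $(6,4,\ldots,4)$ genuinely is an even partition of $m(4k+2)$, since $m(4k+2)\equiv 2 \pmod 4$; otherwise its exclusion would say nothing about even universality.) A pleasant bonus of your method is that the same coloring, with no $2\times 3$ block at all, gives $D=0$ and hence immediately reproves Proposition \ref{4.3}; more generally it shows that for odd $m$ any derangement of $R_{m,n}$ whose cycle type is $(6^a,4^b)$ forces $n \leq 2a$, which is a strictly stronger statement than the one you were asked to prove.
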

\noindent
In particular, for $R_{m,n}$ to be even universal, it is necessary 
that $m$ and $n$ both be even.  Proposition \ref{COOLJOSHPROP} shows 
that having $m$ and $n$ both even is not sufficient; however, we have 
found no examples of excluded even cycle types of $R_{m,n}$ when 
$m,n$ are both even and at least $6$.  Such considerations, and others, 
lead to the following conjecture, made by J. Laison in collaboration with 
the author.  

\begin{conj}
\label{LAISONCONJECTURE}
There is a positive integer $N_0$ such that for all $k,l \geq N_0$, the 
checkerboard graph $R_{2k,2l}$ is even universal.
\end{conj}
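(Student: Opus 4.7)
The plan is to reduce the problem to a combination of two tools: (a) Proposition \ref{4.1}, which asserts that $2 \times m$ checkerboards are even universal; and (b) a local surgery move that merges cycles across strip boundaries. Decompose $R_{2k,2l}$ into $k$ horizontal strips $S_1,\ldots,S_k$, each isomorphic to $R_{2,2l}$. On each strip any even partition of $4l$ is realizable. The key \emph{merge move} is the following: if cycles $C \subset S_i$ and $C' \subset S_{i+1}$ each use a horizontal edge in the same column pair in their mutually adjacent rows, then swapping those two horizontal edges for the two crossing vertical edges fuses $C$ and $C'$ into a single cycle of length $|C|+|C'|$, while preserving the derangement property globally. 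An analogous \emph{split move} reverses it.

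Given a target even partition $\lambda = (a_1,\ldots,a_r)$ of $4kl$, I would first attempt to partition the multiset of parts into $k$ groups each of total sum $4l$ and realize each group on the corresponding strip via Proposition \ref{4.1}. For parts $a$ too large to fit in a single strip (i.e., $a > 4l$), I would use the merge move: pick $\lceil a/(4l) \rceil$ consecutive strips, arrange their strip-derangements so that adjacent-strip cycles have aligned horizontal edge pairs at a common column, and iteratively merge to produce a single cycle of length exactly $a$. The residual parts, which sum to $4l\lceil a/(4l)\rceil - a$ distributed across the strips used, are absorbed into the packing of the remaining $a_i$ by the same mechanism.

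The main obstacle is executing this packing-and-merging scheme in full generality. Two subtleties must be addressed. First, the constrained bin-packing problem (partition even numbers into $k$ bins of capacity $4l$, with large items permitted to straddle consecutive bins) must always admit a solution once $k,l$ are large. The small-case obstructions exposed by Propositions \ref{4.2} and \ref{4.3} stem precisely from the absence of packing slack; for $k,l \geq N_0$ there is enough room to avoid them, and this should follow from an elementary greedy argument once a bound like $4l \geq 12$ rules out the worst rigidity. Second, and more seriously, Proposition \ref{4.1} only guarantees \emph{existence} of a strip derangement of a required cycle type, whereas the merge move additionally demands \emph{specific interface edges} at designated positions. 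Thus the heart of the proof will be a strengthened strip-universality lemma: for any even partition $\pi$ of $4l$ and any prescribed small set of horizontal "interface" edges along the top and bottom rows of $R_{2,2l}$, there is a graph derangement with cycle type $\pi$ using precisely those interface edges. Establishing this enhanced local flexibility — plausibly by an inductive construction on $l$ with explicit case analysis based on which partition parts are to be anchored at each designated interface — is the core technical challenge, and it is the step I would expect to demand the most care.
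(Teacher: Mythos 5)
The statement here is Conjecture~\ref{LAISONCONJECTURE}: the paper states it as an open conjecture, credits it to Laison and the author, and offers no proof, so there is no ``paper's proof'' to compare your argument against. More importantly, what you have submitted is a strategy rather than a proof, and you concede this explicitly: both load-bearing steps are left unestablished. The first is the packing lemma (distribute the parts of an arbitrary even partition of $4kl$ into $k$ bins of capacity $4l$, allowing items to straddle consecutive bins). Note that it is not only parts $a > 4l$ that must straddle: for instance, realizing $(6,6,\ldots,6)$ on strips of capacity $8$ forces small parts to be split across interfaces, so the packing problem is genuinely about deciding which parts go where \emph{and} how straddling parts are cut into even pieces of size at least $2$ assigned to consecutive strips. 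A greedy argument may well work for large $k,l$, but it has to be written down, and ``$4l \geq 12$ rules out the worst rigidity'' is not an argument.

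The second and more serious gap is the one you flag yourself: Proposition~\ref{4.1} gives a strip derangement of a prescribed cycle type, but your merge move needs a strip derangement of a prescribed cycle type \emph{with prescribed horizontal interface edges at prescribed columns}, and the interface demands on the top and bottom rows of a single strip must be met simultaneously. This strengthened lemma is exactly where the difficulty of the conjecture is concentrated, and it cannot hold in the naive form ``any cycle type, any interface placement'': Propositions~\ref{4.2}, \ref{4.3} and \ref{COOLJOSHPROP} show that two stacked strips (the graphs $R_{4,n}$) already fail to be even universal for infinitely many $n$, so any interface lemma you prove must be weak enough to be consistent with those failures yet strong enough that they wash out once $k,l \geq N_0$. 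Your proposal does not yet identify the mechanism by which additional strips create the needed slack. Until the interface lemma is stated precisely and proved, this is an outline of a reasonable attack on an open problem, not a proof of the conjecture.
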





\begin{thebibliography}{EW49}

\bibitem[Be58]{Berge} C. Berge, \emph{Sur le couplage maximum d'un graphe}. C.R. Acad. Sci. Paris 247 (1958), 258–-259. 

\bibitem[Ca37]{Cartan37} H. Cartan, \emph{Th\'eorie des filtres}. C.R. Acad. Sci. Paris 205 (1937), 595--598.


\bibitem[Eg31]{Egervary} E. Egerv\'ary, \emph{Matrixok kombinatorius tulajdons\'agairol}. Matematikai \'es
Fizikai Lapok 38 (1931), 16--28.

\bibitem[EW49]{Everett-Whaples} C.J. Everett and G. Whaples, \emph{Representations of sequences of sets}. Amer. J. Math. 71 (1949), 287-–293. 

\bibitem[Ha35]{Hall} P. Hall, \emph{On representatives of subsets}. J. London Math. Soc. 10 (1935), 26--30.

\bibitem[Ha48]{Hall48} M. Hall Jr., \emph{Distinct representatives of subsets}.
Bull. Amer. Math. Soc. 54 (1948), 922-–926. 

\bibitem[HV50]{Halmos-Vaughan} P.R. Halmos and H.E. Vaughan, \emph{The Marriage Problem}. Amer. J. Math 72 (1950), 214--215.

\bibitem[Ke50]{Kelley50} J.L. Kelley, \emph{The Tychonoff product theorem implies the axiom of choice}.
Fund. Math. 37 (1950), 75–-76. 

\bibitem[Kn]{Knill} O. Knill, \emph{A Brouwer Fixed Point Theorem for Graph 
Endomorphisms}, preprint.

\bibitem[K\H o06]{Konig06} J. K\H onig, \emph{Sur la theorie des ensembles}. C.R. Acad. Sci. Paris 143 (1906), 110--112.

\bibitem[K\H o31]{Konig31} D. K\H onig, \emph{Gr\'afok \'es m\'atrixok}. 
 Matematikai \'es Fizikai Lapok 38 (1031), 116–-119.

\bibitem[K\H o50]{Konig50} D. K\H onig, \emph{Theorie der endlichen und unendlichen Graphen. Kombinatorische Topologie der Streckenkomplexe}. Chelsea Publishing Co., New York, N. Y., 1950.

\bibitem[Le01]{Levine01} L. Levine, \emph{Hall's Marriage Theorem and 
Hamiltonian Cycles in Graphs}.  2001.

\bibitem[Me27]{Menger} K. Menger, \emph{Zur allgemeinen Kurventheorie}. Fund. Math. 10 (1927), 96–-115.

\bibitem[Ra49]{Rado49} R. Rado, \emph{Factorization of even graphs}.
Quart. J. Math., Oxford Ser. 20 (1949), 95–-104. 

\bibitem[Tu47]{Tutte} W.T. Tutte, \emph{The factorization of linear graphs}.
J. London Math. Soc. 22, (1947), 107-–111. 

\bibitem[Tu50]{Tutte50} W.T. Tutte, \emph{The factorization of locally finite graphs}. Canadian J. Math. 2 (1950), 44–-49. 

\bibitem[Tu53]{Tutte53} W.T. Tutte, \emph{The 1-factors of oriented graphs}.
Proc. Amer. Math. Soc. 4 (1953), 922–-931.


\end{thebibliography}
\end{document}